\documentclass{birkau}

\usepackage{amsmath,amssymb,url}

\numberwithin{equation}{section}

\theoremstyle{plain}
\newtheorem{theorem}{Theorem}[section]
\newtheorem{lemma}[theorem]{Lemma}
\newtheorem{proposition}[theorem]{Proposition}

\theoremstyle{definition}
\newtheorem{question}[theorem]{Question}

\DeclareMathOperator{\con}{con}
\DeclareMathOperator{\var}{var}

\begin{document}

\title[Special elements of the lattice of monoid varieties]{Special elements of the lattice\\ of monoid varieties}

\author[S. V. Gusev]{Sergey V. Gusev}
\address{Institute of Natural Sciences and Mathematics,
Ural Federal University,
Lenina str. 51, 620000 Ekaterinburg,
Russia}
\email{sergey.gusb@gmail.com}

\thanks{The work is supported by the Russian Foundation for Basic Research (grant 17-01-00551) and by the Ministry of Education and Science of the Russian Federation (project 1.6018.2017/8.9).}

\subjclass{20M07, 08B15}

\keywords{Monoid, variety, lattice of varieties, neutral element of a lattice, costandard element of a lattice, codistributive element of a lattice, upper-modular element of a lattice}

\begin{abstract}
We completely classify all neutral and costandard elements in the lattice $\mathbb{MON}$ of all monoid varieties. Further, we prove that an arbitrary upper-modular element of $\mathbb{MON}$ except the variety of all monoids is either a completely regular or a commutative variety. Finally, we verify that all commutative varieties of monoids are codistributive elements of $\mathbb{MON}$. Thus, the problems of describing codistributive or upper-modular elements of $\mathbb{MON}$ are completely reduced to the completely regular case.
\end{abstract}

\maketitle

\section{Introduction and summary}
\label{section intr}

The lattice of all semigroup varieties denoted hereinafter by $\mathbb{SEM}$ has been the subject of an intensive research over the last five decades. An extensive and quite diverse material has been accumulated in this direction. It is systematically presented in the survey~\cite{Shevrin-Vernikov-Volkov-09}. In sharp contrast, the lattice $\mathbb{MON}$ of all monoid varieties has received much less attention over the years (when referring to monoid varieties, we consider monoids as algebras with an associative binary operation and the nullary operation that fixes the unit element). Up to the recent time, the latest lattice have been considered in the articles~\cite{Head-68,Pollak-81,Wismath-86} only. However, recently interest in the lattice $\mathbb{MON}$ has grown. This is confirmed by the fact that several papers devoted principally to an examination of identities of monoids contain some non-trivial results about lattices of varieties (see~\cite{Jackson-05,Jackson-Lee-17+,Lee-12,Lee-14}, for instance).

Many questions about the lattice $\mathbb{SEM}$ are formulated in terms of identities. It is proved in the early 1970's in~\cite{Burris-Nelson-71a,Burris-Nelson-71b} that the lattice $\mathbb{SEM}$ does not satisfy any non-trivial identity. A similar fact for the lattice $\mathbb{MON}$ was established quite recently in~\cite{Gusev-17+}. In view of this result, it seems natural to study varieties of monoids with different identities in subvariety lattice. The most important lattice identities are the distributive and modular laws. The problem of describing monoid varieties with distributive subvariety lattice seems to be quite difficult. Indeed, it turns out that even the examination of a much more stronger restriction ``to be a chain'' to lattices of monoid varieties required very considerable efforts (see~\cite{Gusev-Vernikov-17+}).

In this paper, we study several restrictions on the monoid varieties related to the distributive and modular laws. More exactly, we consider special elements of several types in the lattice $\mathbb{MON}$. Let us recall definitions of those of them which will be used below.
An element $x$ of a lattice $L$ is called
\begin{align*}
&\text{\emph{neutral} if}&&\forall\,y,z\in L\colon\ (x\vee y)\wedge(y\vee z)\wedge(z\vee x)\\
&&&\phantom{\forall\,y,z\in L\colon}{}=(x\wedge y)\vee(y\wedge z)\vee(z\wedge x);\\
&\text{\emph{costandard} if}&&\forall\,y,z\in L\colon\quad(x\wedge y)\vee z=(x\vee z)\wedge(y\vee z);\\
&\text{\emph{codistributive} if}&&\forall\,y,z\in L\colon\quad x\wedge(y\vee z)=(x\wedge y)\vee(x\wedge z);\\
&\text{\emph{modular} if}&&\forall\,y,z\in L\colon\quad y\le z\longrightarrow(x\vee y)\wedge z=(x\wedge z)\vee y;\\
&\text{\emph{upper-modular} if}&&\forall\,y,z\in L\colon\quad y\le x\longrightarrow x\wedge(y\vee z)=y\vee(x\wedge z).
\end{align*}
\emph{Lower-modular} elements are defined dually to upper-modular once. It is well known that an element $x\in L$ is neutral if and only if, for any $y,z\in L$, the elements $x$, $y$ and $z$ generate a distributive sublattice of $L$ (see~\cite[Theorem~254]{Gratzer-11}, for instance). Note that neutral elements play an important role in the general theory of lattices. In particular, it is well known that if $a$ is a neutral element in a lattice $L$ then $L$ is decomposable into a subdirect product of the principal ideal and the principal filter of $L$ generated by $a$ (see~\cite[Theorem 254]{Gratzer-11}, for instance). Thus, the knowledge of which elements of a lattice are neutral gives essential information on the structure of the lattice as a whole. It is evident that a neutral element is both lower-modular and costandard; a costandard element is modular; a codistributive element is upper-modular. It is well known also that a costandard element is codistributive (see~\cite[Theorem 253]{Gratzer-11}, for instance). Some information about special elements in arbitrary lattices can be found in~\cite[Section~III.2]{Gratzer-11} and~\cite[Chapter 1]{Seselja-Tepavcevic-01}. 

There are many interesting and deep results about special elements of the mentioned above types in the lattice $\mathbb{SEM}$ (see the surveys~\cite[Section 14]{Shevrin-Vernikov-Volkov-09} and~\cite{Vernikov-15}). In particular, neutral elements of the lattice $\mathbb{SEM}$ have been completely described in~\cite[Proposition 4.1]{Volkov-05}, while in~\cite[Theorem 1.3]{Vernikov-11} it is proved that a semigroup variety is a costandard element of the lattice $\mathbb{SEM}$ if and only if it is a neutral element of this lattice. Codistributive elements of $\mathbb{SEM}$ were examined in~\cite{Vernikov-11}, while upper-modular elements of $\mathbb{SEM}$ were considered in~\cite{Vernikov-08a,Vernikov-08b}.

Special elements in the lattice $\mathbb{MON}$ were not studied so far. The main results of this work give a complete descriptions of neutral and costandard elements of the lattice $\mathbb{MON}$. Besides that, we obtain a valuable information about codistributive and upper-modular elements of $\mathbb{MON}$. 

In order to formulate the first main result of the article, we fix notation for a few concrete varieties. The trivial variety of monoids is denoted by $\bf T$, while $\bf MON$ denotes the variety of all monoids.  We denote by \textbf{SL} the variety of all semilattice monoids. Our first main result is the following theorem.

\begin{theorem}
\label{neutral theorem}
For a monoid variety $\bf V$, the following are equivalent:
\begin{itemize}
\item[\textup{(i)}] $\bf V$ is a modular, lower-modular and upper-modular element of the lattice $\mathbb{MON}$;
\item[\textup{(ii)}] $\bf V$ is a neutral element of the lattice $\mathbb{MON}$;
\item[\textup{(iii)}] $\bf V$ is one of the varieties $\mathbf T$, $\mathbf{SL}$ or $\mathbf{MON}$.
\end{itemize}
\end{theorem}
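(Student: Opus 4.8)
The proof goes through the cycle $\textup{(iii)}\Rightarrow\textup{(ii)}\Rightarrow\textup{(i)}\Rightarrow\textup{(iii)}$. The implication $\textup{(ii)}\Rightarrow\textup{(i)}$ needs nothing new: as recalled in the introduction, a neutral element is lower-modular and costandard, a costandard element is modular, and a neutral element is codistributive, hence upper-modular.

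For $\textup{(iii)}\Rightarrow\textup{(ii)}$, the varieties $\mathbf T$ and $\mathbf{MON}$ are the least and the greatest elements of $\mathbb{MON}$, and the bottom and top of any lattice are neutral, so the only case to treat is $\mathbf{SL}$. I would first record the dichotomy that underlies everything: a monoid variety $\mathbf V$ either contains $\mathbf{SL}$, or satisfies an identity $x^{n}\approx 1$ with $n\ge 1$ (equivalently, consists of groups). Indeed, if $\mathbf{SL}\not\subseteq\mathbf V$ then $\mathbf V$ satisfies some identity $u\approx v$ with $\con(u)\ne\con(v)$, and substituting $1$ for every letter except one that occurs on a single side turns it into an identity $x^{k}\approx 1$. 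The varieties of groups so obtained form an ideal of $\mathbb{MON}$ whose complement is the principal filter generated by $\mathbf{SL}$, while $(\mathbf{SL}]$ is the two-element chain $\{\mathbf T,\mathbf{SL}\}$. Combining this with the description of the identities of the join $\mathbf V\vee\mathbf{SL}$ (these are exactly the identities $u\approx v$ of $\mathbf V$ with $\con(u)=\con(v)$), and with the remark that over a group variety any identity becomes content-preserving after multiplying both sides by the product of its letters while remaining equivalent to the original by cancellation, one checks that $\mathbf V\mapsto(\mathbf V\wedge\mathbf{SL},\,\mathbf V\vee\mathbf{SL})$ is a lattice embedding of $\mathbb{MON}$ into $(\mathbf{SL}]\times[\mathbf{SL})$, which is precisely the assertion that $\mathbf{SL}$ is neutral.

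The substance of the theorem is $\textup{(i)}\Rightarrow\textup{(iii)}$. Suppose $\mathbf V$ is simultaneously modular, lower-modular and upper-modular, and assume for contradiction that $\mathbf V\notin\{\mathbf T,\mathbf{SL},\mathbf{MON}\}$. The first reduction uses the classification of upper-modular elements — an upper-modular element other than $\mathbf{MON}$ is completely regular or commutative — which restricts $\mathbf V$ to these two families; then the dichotomy above splits the remaining possibilities into (a) $\mathbf V$ a nontrivial group variety, and (b) $\mathbf{SL}\subsetneq\mathbf V\subsetneq\mathbf{MON}$. In each case the aim is to exhibit two comparable varieties $\mathbf X<\mathbf Y$ and a third variety so that $\mathbf V$ lies at the apex of a copy of the pentagon $N_5$ inside $\mathbb{MON}$ (or of its dual), which already contradicts modularity (respectively lower-modularity). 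Concretely, when $\mathbf V$ is a group variety with $\mathbf{A}_p\subseteq\mathbf V$ for a prime $p$ (here $\mathbf{A}_p$ is the variety of abelian groups of exponent $p$), one seeks a chain $\mathbf B\subsetneq\mathbf C$ of varieties containing no nontrivial group of exponent $p$ such that adjoining $\mathbf V$ closes the gap, i.e. $\mathbf V\vee\mathbf B=\mathbf V\vee\mathbf C$; for case (b) one shows that $\mathbf V$ must contain or omit each member of a short explicit list of critical monoids and produces the violating configuration in each of the finitely many subcases, reading off from the identities involved which of the three laws fails.

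I expect the last case analysis to be the real difficulty. The abelian and nilpotent group varieties are the delicate point: the naive idea of meeting $\mathbf V$ against a variety of bands degenerates there (both sides of the modular law already agree, for instance when $\mathbf V=\mathbf{A}_p$), so one is forced to build the required chain $\mathbf B\subsetneq\mathbf C$ out of carefully chosen noncancellative commutative varieties instead. Similarly, for the varieties lying just above $\mathbf{SL}$ one must make sure the list of critical monoids is complete and that, for every admissible pattern of containments, a genuine pentagon really occurs in $\mathbb{MON}$ rather than a triple of varieties that only superficially looks non-modular. Once these configurations are produced the contradiction with $\textup{(i)}$ is immediate, and everything else is routine work with monoid identities together with the two reductions described above.
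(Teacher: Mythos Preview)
Your cycle and the two easy implications are fine, and your self-contained argument that $\mathbf{SL}$ is neutral (via the embedding $\mathbf V\mapsto(\mathbf V\wedge\mathbf{SL},\mathbf V\vee\mathbf{SL})$) is correct and in fact close in spirit to what the paper imports from~\cite{Volkov-05}.

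The gap is in $\textup{(i)}\Rightarrow\textup{(iii)}$. What you have written there is a strategy, not a proof: you announce that in each subcase one ``produces the violating configuration'' or ``seeks a chain $\mathbf B\subsetneq\mathbf C$'', but you never exhibit any such configuration, and you yourself flag the abelian-group case and the ``list of critical monoids'' as unresolved difficulties. These are exactly the places where the content of the theorem lives. The paper does not proceed by your group/non-group dichotomy at all; instead it first proves the costandard theorem (modular $+$ upper-modular $\Rightarrow\mathbf V\in\{\mathbf T,\mathbf{SL},\mathbf C_2,\mathbf{MON}\}$) and then uses lower-modularity only to discard $\mathbf C_2$. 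The real work is three explicit pentagon configurations, each built from concrete varieties: for $\mathbf V$ non-commutative completely regular one uses $\mathbf C_2$ and $\mathbf D=\var\{x^2\approx x^3,\,x^2y\approx xyx\approx yx^2\}$ (this same configuration also shows $\mathbf C_2$ is not lower-modular); for $\mathbf V$ commutative containing a nontrivial group one uses $\mathbf B_{2,3}=\var\{x^2\approx x^3\}$ and $\mathbf Q=\var\{yxyzxz\approx yxzxyxz\}$; and for $\mathbf V=\mathbf C_n$ with $n>2$ one uses $\mathbf B_{2,3}$ together with $\mathbf F=\var\{xyx\approx xyx^2,\,x^2y^2\approx y^2x^2,\,x^2y\approx x^2yx,\,xytxy\approx yxtxy\}$. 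Each of these requires a nontrivial isoterm/deduction argument (the $\mathbf Q$ case in particular needs a careful analysis of square-free words), and none of them is suggested by your outline. Until you actually supply configurations of this kind, the implication $\textup{(i)}\Rightarrow\textup{(iii)}$ is not proved.
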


In order to formulate the second main result of the article, we need some notation. We denote by $F^1$ the free monoid over a countably infinite alphabet. The words (i.e., elements of $F^1$) unlike letters are written in bold. Two sides of identities we connect by the symbol~$\approx$, while the symbol~$=$ denotes the equality relation on $F^1$. For an identity system $\Sigma$, we denote by $\var\,\Sigma$ the variety of monoids given by $\Sigma$. Put 
\[
\mathbf C_n = \var\{x^n\approx x^{n+1}, xy\approx yx\}
\]
where $n\ge 2$. Our second main result is the following theorem.

\begin{theorem}
\label{costandard theorem}
For a monoid variety $\bf V$, the following are equivalent:
\begin{itemize}
\item[\textup{(i)}] $\bf V$ is a modular and upper-modular element of the lattice $\mathbb{MON}$;
\item[\textup{(ii)}] $\bf V$ is a costandard element of the lattice $\mathbb{MON}$;
\item[\textup{(iii)}] $\bf V$ is one of the varieties $\mathbf T$, $\mathbf{SL}$, $\mathbf C_2$ or $\mathbf{MON}$.
\end{itemize}
\end{theorem}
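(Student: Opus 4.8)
The plan is to prove the cycle \textup{(ii)}$\,\Rightarrow\,$\textup{(i)}$\,\Rightarrow\,$\textup{(iii)}$\,\Rightarrow\,$\textup{(ii)}. The implication \textup{(ii)}$\,\Rightarrow\,$\textup{(i)} is immediate from the general lattice facts recalled in Section~\ref{section intr}: a costandard element is modular, and it is codistributive, whence upper-modular. For \textup{(iii)}$\,\Rightarrow\,$\textup{(ii)} the varieties $\mathbf T$ and $\mathbf{MON}$ are the least and the greatest elements of $\mathbb{MON}$, hence neutral and in particular costandard, and $\mathbf{SL}$ is costandard because it is neutral by Theorem~\ref{neutral theorem}. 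The only case that needs work is $\mathbf V=\mathbf C_2$, where one must check the costandard law
\[
(\mathbf C_2\wedge\mathbf Y)\vee\mathbf Z=(\mathbf C_2\vee\mathbf Z)\wedge(\mathbf Y\vee\mathbf Z)
\]
for all monoid varieties $\mathbf Y,\mathbf Z$. The inclusion $\subseteq$ holds in every lattice, so one only has to take an identity $\mathbf u\approx\mathbf v$ holding in both $\mathbf C_2\wedge\mathbf Y$ and $\mathbf Z$ and show that it holds in $(\mathbf C_2\vee\mathbf Z)\wedge(\mathbf Y\vee\mathbf Z)$. This is a delicate but elementary computation resting on the simple structure of $\mathbf C_2$: it is generated by the three-element monoid $\{1,a,a^2\}$ with $a^3=a^2$, and an identity holds in $\mathbf C_2$ exactly when its two sides have the same content and, for every letter, the same number of occurrences up to the threshold $2$. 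The codistributivity of the commutative element $\mathbf C_2$, established elsewhere in the paper, helps to streamline one side of the verification.

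For \textup{(i)}$\,\Rightarrow\,$\textup{(iii)} let $\mathbf V$ be a modular and upper-modular element of $\mathbb{MON}$ with $\mathbf V\ne\mathbf{MON}$. By the result on upper-modular elements announced in the abstract, $\mathbf V$ is either completely regular or commutative. Suppose first that $\mathbf V$ is completely regular. I would show that $\mathbf V$ has only trivial subgroups: otherwise $\mathbf V$ contains a nontrivial group variety, and combining it with $\mathbf{SL}$ and a suitably chosen third variety produces a pentagon in $\mathbb{MON}$ with $\mathbf V$ in the appropriate position, contradicting modularity. Then $\mathbf V$ is a variety of band monoids, and a further pentagon argument (or the known description of the bottom of the lattice of band-monoid varieties) forces $\mathbf V\in\{\mathbf T,\mathbf{SL}\}$.

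The commutative case is the heart of the matter. As above, modularity forces $\mathbf V$ to have only trivial subgroups; a commutative monoid variety with this property satisfies $x^n\approx x^{n+1}$ for some positive integer $n$, so $\mathbf V\subseteq\mathbf C_n$. Next I would establish that the interval $[\mathbf T,\mathbf C_n]$ of $\mathbb{MON}$ is the chain $\mathbf T<\mathbf{SL}<\mathbf C_2<\dots<\mathbf C_n$: every nontrivial subvariety of $\mathbf C_n$ contains the atom $\mathbf{SL}$, hence all its nontrivial identities are content-preserving, and substituting $1$ for all letters but one turns such an identity into a nontrivial one-variable identity $x^i\approx x^j$, which together with $x^n\approx x^{n+1}$ defines a strictly smaller member of the chain; an induction on $n$ finishes the claim. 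Consequently $\mathbf V\in\{\mathbf T,\mathbf{SL},\mathbf C_2,\dots,\mathbf C_n\}$, and it remains to rule out $\mathbf C_m$ with $m\ge3$. For this I would prove that $\mathbf C_m$ is not a modular element of $\mathbb{MON}$ when $m\ge3$ by exhibiting varieties $\mathbf Y\le\mathbf Z$ with $\mathbf Y\not\subseteq\mathbf C_m$ for which $(\mathbf C_m\vee\mathbf Y)\wedge\mathbf Z\ne(\mathbf C_m\wedge\mathbf Z)\vee\mathbf Y$ — the witnesses will come from small non-commutative monoid varieties that also arise elsewhere in the paper. This leaves $\mathbf V\in\{\mathbf T,\mathbf{SL},\mathbf C_2\}$, completing \textup{(i)}$\,\Rightarrow\,$\textup{(iii)}.

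The hard part will be the two pentagon constructions inside \textup{(i)}$\,\Rightarrow\,$\textup{(iii)}, and above all the proof that $\mathbf C_m$ fails modularity for $m\ge3$, which needs a good grasp of the lower part of the lattice of monoid varieties and a carefully chosen pentagon. A more subtle point than it first appears is the verification that $\mathbf C_2$ is costandard: in contrast with the situation in $\mathbb{SEM}$, where being costandard is equivalent to being neutral, here there is no reduction to Theorem~\ref{neutral theorem}, and the costandard law for $\mathbf C_2$ has to be checked by hand.
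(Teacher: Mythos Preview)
Your overall cycle and the structure of \textup{(i)}$\Rightarrow$\textup{(iii)} are close to the paper's, and the pentagon witnesses you anticipate are precisely Lemmas~\ref{comm comp-reg}, \ref{V is commutative and non-combinatorial} and~\ref{C_n whenever n>2}. Two points deserve correction, however.

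First, there is a circularity: you justify that $\mathbf{SL}$ is costandard by invoking Theorem~\ref{neutral theorem}, but in the paper Theorem~\ref{neutral theorem} is proved \emph{from} Theorem~\ref{costandard theorem}. The neutrality of $\mathbf{SL}$ (and of $\mathbf T$, $\mathbf{MON}$) is available independently via Lemma~\ref{some neutral elements}, and that is what you should cite.

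Second, for $\mathbf C_2$ you propose to verify the costandard law directly. The paper takes a much cleaner route: it uses the general lattice fact that an element which is simultaneously modular and codistributive is costandard. Codistributivity of $\mathbf C_2$ is free from Proposition~\ref{commut are codistr}, so everything reduces to showing that $\mathbf C_2$ is a \emph{modular} element. The paper does this with Je\v{z}ek's criterion: assume there exist $\mathbf U\subset\mathbf W$ with $\mathbf U\wedge\mathbf C_2=\mathbf W\wedge\mathbf C_2$ and $\mathbf U\vee\mathbf C_2=\mathbf W\vee\mathbf C_2$; one checks that $\mathbf C_2\nsubseteq\mathbf U,\mathbf W$, whence $\mathbf U,\mathbf W$ are completely regular, eliminates the group case using~\cite[Lemma~2.6]{Vernikov-08a}, and then, writing a basis of $\mathbf U$ in the form $\{x\approx x^{n+1}\}\cup\Sigma^\ast$ with every identity in $\Sigma^\ast$ having all letters of multiplicity $\ge2$, observes via Lemma~\ref{word problem C_2} that $\Sigma^\ast$ already holds in $\mathbf C_2$ and hence in $\mathbf W$, forcing $\mathbf W\subseteq\mathbf U$. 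Your direct attack on the costandard identity would have to reproduce essentially this argument inside a messier framework.

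Finally, your treatment of the completely regular case in \textup{(i)}$\Rightarrow$\textup{(iii)} is more roundabout than necessary. The paper does not pass through ``trivial subgroups $\Rightarrow$ bands $\Rightarrow\{\mathbf T,\mathbf{SL}\}$''; instead Lemma~\ref{comm comp-reg} kills all non-commutative completely regular varieties in one stroke (the pentagon uses $\mathbf C_2$ and $\mathbf D$, not $\mathbf{SL}$), so after Proposition~\ref{alternative} one is immediately in the commutative case. From there Lemma~\ref{V is commutative and non-combinatorial} removes nontrivial groups, Lemma~\ref{commutative} gives $\mathbf V\in\{\mathbf T,\mathbf{SL},\mathbf C_k\}$, and Lemma~\ref{C_n whenever n>2} rules out $k\ge3$.
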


As we have already mentioned above, an element of $\mathbb{SEM}$ is neutral if and only if it is costandard. Theorems~\ref{neutral theorem} and~\ref{costandard theorem} show that in the lattice $\mathbb{MON}$ these properties are not equivalent.

The following assertion gives a valuable information about upper-modu\-lar elements of the lattice $\mathbb{MON}$. A variety of monoids is called \emph{proper} if it differs from the variety of all monoids. A variety of monoids is said to be \emph{completely regular} if it consists of \emph{completely regular monoids}~(i.e., unions of groups).

\begin{proposition}
\label{alternative}
If a proper monoid variety $\bf V$ is an upper-modular element of the lattice $\mathbb{MON}$ then $\mathbf V$ is either commutative or completely regular.
\end{proposition}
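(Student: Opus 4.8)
The plan is to prove the contrapositive: if $\mathbf V$ is a proper monoid variety that is neither commutative nor completely regular, then $\mathbf V$ is not an upper-modular element of $\mathbb{MON}$. Since the assumption $\mathbf X\subseteq\mathbf V$ already forces $\mathbf X\vee(\mathbf V\wedge\mathbf Y)\subseteq\mathbf V\wedge(\mathbf X\vee\mathbf Y)$, it suffices to produce monoid varieties $\mathbf X\subseteq\mathbf V$ and $\mathbf Y$ together with a monoid $M$ that lies in $\mathbf V\wedge(\mathbf X\vee\mathbf Y)$ but not in $\mathbf X\vee(\mathbf V\wedge\mathbf Y)$; such a $\mathbf Y$ is necessarily not contained in $\mathbf V$, since otherwise the desired equality is automatic.

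The first step is a structural lemma on complete regularity: a monoid variety $\mathbf W$ is completely regular if and only if it satisfies $x^{n+1}\approx x$ for some $n\ge1$, equivalently, if and only if $\mathbf C_2\nsubseteq\mathbf W$. Only the implication that a non-completely-regular variety contains $\mathbf C_2$ requires work. If $\mathbf W$ is not completely regular, then $\mathbf W\not\models x^{n+1}\approx x$ for every $n$, so in the free monogenic monoid $F$ of $\mathbf W$ the free generator $a$ lies in no subgroup; hence $F=\{1,a,a^2,\dots\}$, with all these elements pairwise distinct, admits the three-element monoid $\{1,a,a^2\mid a^3=a^2\}$ as a homomorphic image. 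Embedding, for each $k$, the free commutative monoid of rank $k$ satisfying $x^2\approx x^3$ into the $k$-th direct power of this three-element monoid shows that it generates $\mathbf C_2$; hence $\mathbf C_2\subseteq\mathbf W$. Applied to $\mathbf V$ this gives $\mathbf C_2\subseteq\mathbf V$, and in particular $\mathbf{SL}\subseteq\mathbf V$.

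The second step treats the non-commutativity. Since $\mathbf V\not\models xy\approx yx$ the rank-two free monoid of $\mathbf V$ is non-commutative, so $\mathbf V$ contains a minimal non-commutative monoid variety. These are of two kinds: a short explicit list of non-group varieties --- for instance the varieties generated by the monoid $\{1,a,b\mid ab=a,\ ba=b,\ a^2=a,\ b^2=b\}$, by its mirror image, and by the five-element monoid $\{1,x,y,xy,0\}$ associated with the word $xy$ --- together with the (infinitely many) minimal non-abelian group varieties. Accordingly the proof splits into the finitely many cases corresponding to the list, plus one further case where only the fact that $\mathbf V$ contains a non-abelian finite group is used; in that last case the inclusion $\mathbf C_2\subseteq\mathbf V$ additionally produces a non-commutative monoid of $\mathbf V$ that is not a union of groups, and the witnesses below can be chosen uniformly in the group.

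In every case I would take $\mathbf X$ to be one of $\mathbf C_2$, $\mathbf{SL}$, the minimal non-commutative subvariety available in $\mathbf V$, or a join of these --- always a subvariety of $\mathbf V$ --- and let $\mathbf Y$ be a concretely presented variety adapted to the case, engineered so that $\mathbf V\wedge\mathbf Y$ is commutative while $\mathbf X\vee\mathbf Y$ contains a non-commutative monoid $M$ that also belongs to $\mathbf V$. Then $M\in\mathbf V\wedge(\mathbf X\vee\mathbf Y)$, whereas $\mathbf X\vee(\mathbf V\wedge\mathbf Y)$ is a join of $\mathbf X$ with a commutative variety and so cannot contain $M$; this last point is certified by exhibiting a single identity valid in both $\mathbf X$ and $\mathbf V\wedge\mathbf Y$ but failing in $M$. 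The main obstacle is twofold. First, joins of monoid varieties admit no convenient identity basis, so verifying simultaneously that $M\in\mathbf X\vee\mathbf Y$ and $M\notin\mathbf X\vee(\mathbf V\wedge\mathbf Y)$ rests on a delicate, largely combinatorial analysis of the words occurring in the relevant identities (isoterms, linear letters, deletability of occurrences). Second, one must guarantee that the case distinction is exhaustive, which is precisely the role of the two structural descriptions above --- of completely regular varieties and of minimal non-commutative monoid varieties.
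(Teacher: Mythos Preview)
Your proposal is not a proof but a plan, and the plan leaves exactly the hard part unwritten: you never say what $\mathbf Y$ is in any of the cases, you never exhibit the monoid $M$, and you do not verify that your list of minimal non-commutative monoid varieties is exhaustive. You yourself flag both obstacles in the last paragraph without addressing them. As stated, nothing can be checked.

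More importantly, the case split on minimal non-commutative subvarieties of $\mathbf V$ is unnecessary and makes the argument much heavier than it needs to be. The paper avoids it completely with two observations you are missing. First, if $\mathbf V$ is neither commutative nor completely regular then not merely $\mathbf C_2$ but the strictly larger non-commutative variety $\mathbf D=\var\{x^2\approx x^3,\ x^2y\approx xyx\approx yx^2\}$ is contained in $\mathbf V$ (this is Lemma~\ref{non-cr and non-commut}). Second, since $\mathbf V$ is proper and the variety of all monoids is generated by the minimal non-Abelian group varieties, there is a minimal non-Abelian group variety $\mathbf G$ with $\mathbf G\nsubseteq\mathbf V$; consequently $\mathbf V\wedge\mathbf G$ is an Abelian group variety $\mathbf A_n$. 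Now a single uniform choice works: take $\mathbf X=\mathbf C_2$ and $\mathbf Y=\mathbf G$. Then $\mathbf C_2\vee(\mathbf V\wedge\mathbf G)=\mathbf C_2\vee\mathbf A_n$ is commutative, whereas $\mathbf C_2\vee\mathbf G$ is non-commutative and non-completely-regular, so by Lemma~\ref{comm comp-reg} it contains $\mathbf D$; hence $\mathbf V\wedge(\mathbf C_2\vee\mathbf G)\supseteq\mathbf D$ is non-commutative. Since $\mathbf C_2\subseteq\mathbf V$, this violates upper-modularity.

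So the shape of the witnessing triple is exactly the one you guessed ($\mathbf X\subseteq\mathbf V$, $\mathbf V\wedge\mathbf Y$ commutative, a non-commutative witness inside $\mathbf V\wedge(\mathbf X\vee\mathbf Y)$), but the correct choice of $\mathbf Y$ comes from \emph{outside} $\mathbf V$ via a group variety, not from an analysis of what lies \emph{inside} $\mathbf V$; and the witness is the fixed variety $\mathbf D$, available uniformly by Lemma~\ref{non-cr and non-commut}, rather than something depending on the case.
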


This proposition immediately implies that any proper monoid variety that is a codistributive element of $\mathbb{MON}$ also is either commutative or completely regular. To determine codistributive elements in the completely regular case, we need to consider, in particular, periodic group varieties. The lattice of periodic group varieties is modular but not distributive. Therefore, it contains the 5-element modular non-distributive sublattice. It is evident that all three pairwise non-comparable elements of this sublattice are non-codistributive elements of $\mathbb{MON}$. We see that the problem of describing codistributive elements of $\mathbb{MON}$ in the completely regular case is closely related to the problem of describing periodic group varieties with distributive subvariety lattice. The latter problem seems to be extremely difficult (see~\cite[Subsection~11.2]{Shevrin-Vernikov-Volkov-09} for more detailed comments), whence the former problem is extremely difficult too. Fortunately, out of the completely regular case, the problem of describing codistributive and even upper-modular elements of $\mathbb{MON}$ possesses the complete decision. Proposition~\ref{alternative} shows that, to achieve this goal, it suffices to consider commutative varieties. It tuns out that the following assertion is true.

\begin{proposition}
\label{commut are codistr}
Every commutative monoid variety is a codistributive element of the lattice $\mathbb{MON}$.
\end{proposition}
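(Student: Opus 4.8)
The plan is to fix a commutative monoid variety $\mathbf C$ and two arbitrary monoid varieties $\mathbf X$ and $\mathbf Y$, and verify $\mathbf C\wedge(\mathbf X\vee\mathbf Y)\subseteq(\mathbf C\wedge\mathbf X)\vee(\mathbf C\wedge\mathbf Y)$ (the reverse inclusion being trivial). The key reduction is that $\mathbf C\wedge(\mathbf X\vee\mathbf Y)$ is again a \emph{commutative} variety, so everything takes place inside the lattice of commutative monoid varieties, whose structure is well understood: by the results on commutative monoid identities, a commutative monoid variety is determined by identities of the very restricted shapes $x^n\approx x^{n+m}$ (power identities) together with permutation identities $x_1\cdots x_k\approx x_{\sigma(1)}\cdots x_{\sigma(k)}$ and identities that identify a word with a shorter word obtained by deleting a letter that occurs at least twice (the "deletion" identities of the form $x^2\mathbf w\approx x\mathbf w$-type consequences). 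I would quote or re-derive the normal-form description: every commutative variety is defined, relative to the variety of all commutative monoids, by $x^{n}\approx x^{n+m}$ for suitable $n,m$ and by finitely many identities $\mathbf u\approx\mathbf v$ where $\mathbf u,\mathbf v$ are commutative words with the same content and each letter that is "linear" (occurs once) on one side occurs once on the other.

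Second, I would analyze how joins behave on such identities. The crucial point is that for commutative varieties the join is computed by intersecting identity bases in a controlled way: an identity $\mathbf u\approx\mathbf v$ holds in $\mathbf X\vee\mathbf Y$ iff it holds in both $\mathbf X$ and $\mathbf Y$, and — this is where commutativity does the work — for the restricted identity shapes above there is no "mixing" phenomenon, i.e.\ a commutative identity that fails in neither $\mathbf C\wedge\mathbf X$ nor $\mathbf C\wedge\mathbf Y$ already fails in $\mathbf C\wedge(\mathbf X\vee\mathbf Y)$ cannot happen. Concretely, given an identity $\mathbf u\approx\mathbf v$ of commutative type that holds in $(\mathbf C\wedge\mathbf X)\vee(\mathbf C\wedge\mathbf Y)$, hence in both $\mathbf C\wedge\mathbf X$ and $\mathbf C\wedge\mathbf Y$, I want to show it holds in $\mathbf C\wedge(\mathbf X\vee\mathbf Y)$. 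Since $\mathbf u\approx\mathbf v$ is a commutative identity, it holds in $\mathbf C$; it remains to show it holds in $\mathbf X\vee\mathbf Y$, equivalently in both $\mathbf X$ and $\mathbf Y$. Here I use that $\mathbf u\approx\mathbf v$, being a consequence of the commutative law plus the identities of $\mathbf C\wedge\mathbf X$, is actually (modulo commutativity, which we may assume available since we only care about substituting into commutative contexts... ) — and this is the subtle step — a consequence \emph{in $\mathbf X$} of the commutative identities that $\mathbf X$ does satisfy: one shows that a commutative-type identity holding in $\mathbf C\wedge\mathbf X$ either holds in $\mathbf X$ already, or is "responsible for" the commutativity drop, which is impossible since $\mathbf u\approx\mathbf v$ is itself commutative.

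To make the previous paragraph rigorous I would use the explicit description of $\mathbf C\wedge\mathbf X$: its identity basis consists of the commutative law, the identities of $\mathbf X$ rewritten in commutative normal form, and the identities of $\mathbf C$. A commutative identity $\mathbf u\approx\mathbf v$ deducible from this basis is deducible from (commutativity) $+$ (normal forms of $\mathbf X$'s identities) $+$ ($\mathbf C$'s identities), and since $\mathbf u\approx\mathbf v$ already lies in $\mathbf C$, the genuine content is its deducibility from commutativity plus $\mathbf X$. The technical heart is a deduction-theoretic lemma: if a balanced/commutative identity follows from the commutative law together with a set $\Sigma$ of (arbitrary) monoid identities, then it follows from the commutative law together with the "commutative parts" of $\Sigma$ — but more usefully, if it follows from $\mathrm{Com}+\Sigma$ and is itself implied by $\mathrm{Com}$ alone being false would be a contradiction, so the real claim is: a commutative identity holding in $\mathbf C\wedge\mathbf X$ holds in $\mathbf X$. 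This is essentially because substitutions witnessing failure of a commutative identity in $\mathbf X$ can be "symmetrized" using elements of a commutative monoid in $\mathbf C$, or, cleaner: $\mathbf C\wedge\mathbf X$ contains the "commutative core" $\mathbf X\wedge\mathbf{COM}$ where $\mathbf{COM}=\mathrm{var}\{xy\approx yx\}$, and $\mathbf C\wedge\mathbf X=\mathbf C\wedge(\mathbf X\wedge\mathbf{COM})$, so the whole computation collapses to the lattice of commutative monoid varieties, where codistributivity of $\mathbf C$ is the statement that $\mathbf C$ is codistributive in $\mathbb{MON}_{com}$.

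The main obstacle, and the place I expect to spend real effort, is exactly this last reduction — proving that $\mathbf C\wedge(\mathbf X\vee\mathbf Y)$ and the two terms $\mathbf C\wedge\mathbf X$, $\mathbf C\wedge\mathbf Y$ are all determined by the same data as the corresponding meets taken inside $\mathbb{MON}_{com}$, i.e.\ that meeting with a fixed commutative variety $\mathbf C$ factors through the "commutativization" operator $\mathbf X\mapsto\mathbf X\vee\mathbf{COM}$-dual, equivalently $\mathbf X\mapsto\mathbf X\wedge\mathbf{COM}$, \emph{and} that $\mathbf{COM}$ is codistributive. Once the problem is confined to commutative monoid varieties, codistributivity should follow from their explicit lattice-theoretic description: commutative monoid varieties form a lattice that decomposes (via the power-index and the "linear letter" data) into a product of well-understood distributive-ish pieces, and I would pin down codistributivity of $\mathbf C$ by checking it on this decomposition, reducing finally to the observation that in the lattice of periodic commutative \emph{group} varieties every element is codistributive (the lattice of commutative group varieties of a fixed exponent is distributive), and the "nilpotent/aperiodic" part is a chain-like distributive lattice. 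I therefore expect the write-up to consist of: (1) recalling the normal-form/identity-basis description of commutative monoid varieties; (2) the reduction lemma $\mathbf C\wedge\mathbf X=\mathbf C\wedge(\mathbf X\wedge\mathbf{COM})$ together with codistributivity of $\mathbf{COM}$; and (3) the verification of codistributivity inside $\mathbb{MON}_{com}$ via its structural decomposition.
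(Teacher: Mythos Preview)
Your plan has a genuine gap at the load-bearing step. The reduction $\mathbf C\wedge\mathbf X=\mathbf C\wedge(\mathbf X\wedge\mathbf{COM})$ is trivially true since $\mathbf C\subseteq\mathbf{COM}$, but to pass from $\mathbf C\wedge(\mathbf X\vee\mathbf Y)$ to something living inside $\mathbb{MON}_{\mathrm{com}}$ you need $\mathbf{COM}\wedge(\mathbf X\vee\mathbf Y)=(\mathbf{COM}\wedge\mathbf X)\vee(\mathbf{COM}\wedge\mathbf Y)$, i.e.\ codistributivity of $\mathbf{COM}$ itself. That is precisely the proposition for $\mathbf V=\mathbf{COM}$, and it is not any easier than the general statement; you list it as an ingredient but never argue for it. The deduction-theoretic lemma you float in its stead --- ``a commutative identity holding in $\mathbf C\wedge\mathbf X$ already holds in $\mathbf X$'' --- is false: take $\mathbf C=\mathbf{SL}$ and $\mathbf X$ any non-idempotent variety, then $x\approx x^2$ holds in $\mathbf C\wedge\mathbf X\subseteq\mathbf{SL}$ but not in $\mathbf X$. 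The same failure obstructs the ``balanced/commutative identity follows from the commutative parts of $\Sigma$'' claim.

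The paper's proof does not attempt any such reduction. It works directly with an arbitrary commutative $\mathbf V$: after disposing of the case $\mathbf V\subseteq\mathbf Y$ or $\mathbf V\subseteq\mathbf Z$, it shows $\mathbf X:=\mathbf V\wedge(\mathbf Y\vee\mathbf Z)$ is \emph{periodic} commutative and then invokes Head's structure theorem (Lemma~\ref{commutative}) to write $\mathbf X=\mathbf M\vee\mathbf A_s$ with $\mathbf M\in\{\mathbf T,\mathbf{SL},\mathbf C_n\}$. The real content is two ``join-primeness'' facts proved against the \emph{ambient} lattice $\mathbb{MON}$, not against $\mathbb{MON}_{\mathrm{com}}$: (i) if $\mathbf C_n\subseteq\mathbf Y\vee\mathbf Z$ then $\mathbf C_n\subseteq\mathbf Y$ or $\mathbf C_n\subseteq\mathbf Z$, obtained via the power-identity criterion $\mathbf C_n\nsubseteq\mathbf W\Leftrightarrow\mathbf W\models x^{n-1}\approx x^{n-1+\ell}$; and (ii) each prime-power Abelian group variety $\mathbf A_{p^k}$ is codistributive in $\mathbb{MON}$, imported from the corresponding $\mathbb{SEM}$ result via the embedding of Proposition~\ref{MON sublattice of SEM}. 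These are exactly the facts your ``codistributivity of $\mathbf{COM}$'' step would need, so the detour through $\mathbb{MON}_{\mathrm{com}}$ buys nothing; you should prove (i) and (ii) directly and assemble them via Head's theorem as the paper does.
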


We note that Propositions~\ref{alternative} and~\ref{commut are codistr} play an important role in the proof of Theorems~\ref{neutral theorem} and~\ref{costandard theorem}.

The article consists of four sections. Section~\ref{section preliminaries} contains definitions, notation and auxiliary results. In Section~\ref{section on modular elements} several auxiliary results about modular or lower-modular elements of the lattice $\mathbb{MON}$ are collected. Finally, Section~\ref{section proofs} is devoted to the proofs of Theorems~\ref{neutral theorem} and~\ref{costandard theorem} and Propositions~\ref{alternative} and~\ref{commut are codistr}.

\section{Preliminaries}
\label{section preliminaries}

We start with the fact that is a part of the semigroup folklore (it is noted in~\cite[Section 1.1]{Jackson-Lee-17+}, for instance).

\begin{proposition}
\label{MON sublattice of SEM}
Let $M$ be a monoid. We denote by $\mathbf V$ the monoid variety generated by $M$ and by $\mathbf W$ the semigroup variety generated by this monoid. Then the map $\mathbf V\mapsto \mathbf W$ is an injective homomorphism of the lattice $\mathbb{MON}$ into the lattice $\mathbb{SEM}$.
\end{proposition}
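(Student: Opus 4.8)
The plan is to verify three things in turn: that the map is well-defined (i.e.\ that $\mathbf W$ depends only on $\mathbf V$, not on the chosen generating monoid $M$), that it is a lattice homomorphism, and that it is injective. For the first point, I would argue that $\mathbf W$ admits an intrinsic description in terms of $\mathbf V$: namely $\mathbf W$ is the semigroup variety defined by exactly those semigroup identities $\mathbf u\approx\mathbf v$ that hold in $M$, and since a monoid identity and the same identity read as a semigroup identity have the same meaning in a monoid, the semigroup identities valid in $M$ are precisely the semigroup identities valid in every monoid of $\mathbf V$. Thus $\mathbf W = \operatorname{var}_{\mathbb{SEM}}\Sigma$ where $\Sigma$ is the set of semigroup identities holding throughout $\mathbf V$; this manifestly does not depend on $M$.

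**Homomorphism.** For this it is cleanest to exhibit the map explicitly as $\mathbf V\mapsto\Phi(\mathbf V)$, where $\Phi(\mathbf V)$ is the semigroup variety defined by all semigroup identities valid in $\mathbf V$. One then checks $\Phi$ preserves $\wedge$ and $\vee$. The meet case: $\Phi(\mathbf V_1\wedge\mathbf V_2)$ is defined by the semigroup identities valid in both $\mathbf V_1$ and $\mathbf V_2$ (a monoid identity holds in $\mathbf V_1\wedge\mathbf V_2$ iff it holds in both factors, and the same is true at the level of the generating monoids $M_1, M_2$, since $\mathbf V_1\wedge\mathbf V_2$ is generated as a monoid variety by $M_1\times M_2$ up to the usual arguments), which is exactly the identity set of $\Phi(\mathbf V_1)\wedge\Phi(\mathbf V_2)$. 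The join case is dual: $\mathbf V_1\vee\mathbf V_2$ is generated as a monoid variety by $M_1$ and $M_2$ jointly, so the semigroup identities valid in it are those valid in both $M_1$ and $M_2$, matching $\Phi(\mathbf V_1)\vee\Phi(\mathbf V_2)$. Throughout, the one technical point to keep straight is that passing from a monoid to the semigroup it generates does not change which \emph{semigroup} identities hold — a monoid, viewed as a semigroup, already contains a unit, and no new semigroup identities can appear in a sub- or quotient- or product-semigroup beyond those forced, so the semigroup variety generated by $M$ really is cut out by the semigroup identities of $M$.

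**Injectivity.** Suppose $\mathbf V_1\ne\mathbf V_2$ as monoid varieties; without loss of generality there is a monoid identity $\mathbf u\approx\mathbf v$ holding in $\mathbf V_1$ but not in $\mathbf V_2$. The subtlety is that $\mathbf u$ or $\mathbf v$ may be the empty word, which is not a legitimate semigroup identity. But an identity of the form $\mathbf u\approx 1$ in a monoid variety forces the variety to be trivial (substitute $1$ for all variables to see $\mathbf u$ must already be trivial, or rather: $\mathbf u\approx 1$ with $\mathbf u$ a nonempty word means every element equals the unit times itself appropriately — more simply, in any monoid satisfying $\mathbf u\approx 1$ one substitutes to collapse), so if either side is empty the nonempty identity separating the two varieties can be replaced by one not involving the empty word (e.g.\ $\mathbf u x\approx x$). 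Hence we may assume $\mathbf u,\mathbf v$ are both nonempty, so $\mathbf u\approx\mathbf v$ is a genuine semigroup identity; it holds in every monoid of $\mathbf V_1$, hence in the semigroup generated by a monoid generating $\mathbf V_1$, hence in $\Phi(\mathbf V_1)$, but fails in some monoid of $\mathbf V_2$ and therefore in $\Phi(\mathbf V_2)$. Thus $\Phi(\mathbf V_1)\ne\Phi(\mathbf V_2)$.

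**Main obstacle.** There are no deep obstacles here — this is folklore — but the point requiring the most care is the empty-word issue in the injectivity argument, together with the correct handling of joins: one must be sure that the semigroup variety generated by the monoid join is governed by the common semigroup identities and not accidentally larger because of the extra structure a monoid carries. Making the ``intrinsic identity-set'' description of $\Phi$ precise up front is what makes all three parts routine.
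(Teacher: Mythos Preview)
The paper does not prove this proposition at all: it is stated as folklore with a pointer to \cite[Section~1.1]{Jackson-Lee-17+}. So there is no ``paper's approach'' to compare against, and the question is simply whether your argument is correct. The overall plan (well-definedness via the identity description of $\Phi$, then preservation of $\vee$ and $\wedge$, then injectivity) is the natural one, and your treatment of the join and of injectivity is essentially fine. The meet case, however, contains a genuine error.

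You write that ``a monoid identity holds in $\mathbf V_1\wedge\mathbf V_2$ iff it holds in both factors'' and that ``$\mathbf V_1\wedge\mathbf V_2$ is generated as a monoid variety by $M_1\times M_2$.'' Both statements are false; they describe the \emph{join} $\mathbf V_1\vee\mathbf V_2$, not the meet. The meet $\mathbf V_1\wedge\mathbf V_2=\mathbf V_1\cap\mathbf V_2$ satisfies every identity derivable from the identities of $\mathbf V_1$ together with those of $\mathbf V_2$, a set that is typically strictly larger than the identities common to both. For instance, if $\mathbf V_1=\var\{x^2\approx 1\}$ and $\mathbf V_2=\var\{x^2\approx x\}$, then $\mathbf V_1\wedge\mathbf V_2=\mathbf T$ satisfies $x\approx y$, which holds in neither $\mathbf V_1$ nor $\mathbf V_2$. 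So the inclusion $\Phi(\mathbf V_1)\wedge\Phi(\mathbf V_2)\subseteq\Phi(\mathbf V_1\wedge\mathbf V_2)$ needs a real argument, and yours---being the join argument relabelled---does not supply one. (One way to proceed is to show that every monoid identity of $\mathbf V_i$ is equivalent over monoids to a semigroup identity, e.g.\ $\mathbf u\approx\mathbf v\ \Leftrightarrow\ x_0\mathbf u\approx x_0\mathbf v$ for fresh $x_0$, and then argue that the relevant deductions can be carried out at the semigroup level; but this still requires care.)

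A smaller point: in the injectivity paragraph, the claim that $\mathbf u\approx 1$ forces the variety to be trivial is false (take $\mathbf u=x^2$, which gives the variety of elementary abelian $2$-groups). Your subsequent fix---replacing $\mathbf u\approx 1$ by the equivalent semigroup identity $\mathbf u x_0\approx x_0$ with $x_0$ fresh---is correct and is all that is needed; the ``forces triviality'' remark should simply be dropped.
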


Obviously, the varieties $\bf T$ and $\bf MON$ are neutral elements of the lattice $\mathbb{MON}$. It is proved in~\cite[Proposition 2.4]{Volkov-05} that $\bf SL$ is a neutral element of the lattice $\mathbb{SEM}$. Using Proposition~\ref{MON sublattice of SEM} we have that the following statement is true.

\begin{lemma}
\label{some neutral elements}
The varieties $\bf T$, $\bf SL$ and $\bf MON$ are neutral elements of the lattice $\mathbb{MON}$.
\end{lemma}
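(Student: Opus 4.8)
The plan is to treat the three varieties separately, the only substantial case being $\mathbf{SL}$, which I would reduce to the corresponding property in $\mathbb{SEM}$ by means of the embedding of Proposition~\ref{MON sublattice of SEM}.

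For $\mathbf T$ and $\mathbf{MON}$ there is nothing to do beyond the observation already made in the text: $\mathbf T$ and $\mathbf{MON}$ are the least and the greatest elements of $\mathbb{MON}$, respectively, and the least and greatest elements of an arbitrary lattice are neutral. (If $x$ is the least element, then $x\vee y=y$ and $x\wedge y=x$, so each side of the neutrality law collapses to $y\wedge z$; the case of the greatest element is dual.)

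For $\mathbf{SL}$: take $M$ to be the relatively free monoid of $\mathbf{SL}$ of countably infinite rank, so that $\mathbf{SL}=\var M$ in $\mathbb{MON}$, and let $\Phi\colon\mathbb{MON}\to\mathbb{SEM}$ be the injective lattice homomorphism supplied by Proposition~\ref{MON sublattice of SEM}. Then $\Phi(\mathbf{SL})$ is the semigroup variety generated by $M$. On the one hand this variety is contained in the semigroup variety of all semilattices; on the other hand $M$ has a two-element subsemigroup that is a semilattice, and a two-element semilattice already generates the semigroup variety of all semilattices. Hence $\Phi(\mathbf{SL})$ is precisely the semigroup variety of all semilattices, which is a neutral element of $\mathbb{SEM}$ by \cite[Proposition~2.4]{Volkov-05}. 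It then remains to transfer neutrality back through $\Phi$: for arbitrary $\mathbf X,\mathbf Y\in\mathbb{MON}$, instantiating the neutrality law for $\Phi(\mathbf{SL})$ at the elements $\Phi(\mathbf X),\Phi(\mathbf Y)$ and using that $\Phi$ preserves $\vee$ and $\wedge$ shows that $\Phi$ sends the two sides of the neutrality law, evaluated at $\mathbf{SL},\mathbf X,\mathbf Y$, to one and the same element of $\mathbb{SEM}$; since $\Phi$ is injective, these two sides coincide already in $\mathbb{MON}$, so $\mathbf{SL}$ is neutral there.

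I do not anticipate a real obstacle here: the argument is essentially the remark preceding the lemma made precise. The two points that deserve a line of justification are the identification of the image $\Phi(\mathbf{SL})$ with the semigroup variety of all semilattices --- where one uses that every semilattice embeds as a subsemigroup into a semilattice monoid (adjoin an identity element) --- and the general principle that an injective lattice homomorphism reflects the property of being neutral, i.e.\ that neutrality, being a universally quantified lattice-term identity in the distinguished element, descends from $\mathbb{SEM}$ to the sublattice $\Phi(\mathbb{MON})$, which is isomorphic to $\mathbb{MON}$.
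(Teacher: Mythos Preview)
Your proposal is correct and follows exactly the route indicated in the paper: the paper simply notes that $\mathbf T$ and $\mathbf{MON}$ are obviously neutral and that the neutrality of $\mathbf{SL}$ follows from \cite[Proposition~2.4]{Volkov-05} via Proposition~\ref{MON sublattice of SEM}. You have merely spelled out the two details the paper leaves implicit, namely that the image of $\mathbf{SL}$ under the embedding is the semigroup variety of all semilattices and that an injective lattice homomorphism reflects neutrality.
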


The variety of all Abelian groups whose exponent divides $n$ is denoted by $\mathbf A_n$. We note that $\mathbf A_1=\mathbf T$. We need the following result obtained in~\cite{Head-68}.

\begin{lemma}
\label{commutative}
If $\mathbf V$ is a periodic commutative monoid variety then $\mathbf V=\mathbf A_n\vee\mathbf M$ where $n$ is some natural number and $\mathbf M$ is one of the varieties $\mathbf T$, $\mathbf{SL}$ or $\mathbf C_m$ for some $m\ge 2$.
\end{lemma}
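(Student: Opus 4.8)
The plan is to exploit the nullary operation (the unit) to reduce everything to one variable, and then to invoke the classical structure of monogenic commutative monoids. The crucial point is that in a \emph{monoid} variety one may substitute the unit for any variable. Hence, if $\mathbf V$ is a commutative monoid variety and $\mathbf u\approx\mathbf v$ is any identity, writing $\mathbf u=\prod_{i}x_i^{a_i}$ and $\mathbf v=\prod_{i}x_i^{b_i}$ (which is legitimate since $\mathbf V$ is commutative), one has $\mathbf V\models\mathbf u\approx\mathbf v$ if and only if $\mathbf V\models x_i^{a_i}\approx x_i^{b_i}$ for every~$i$: the forward implication comes from substituting $1$ for all variables other than $x_i$, and the converse from successively replacing each factor $x_i^{a_i}$ by $x_i^{b_i}$. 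Thus a commutative monoid variety is completely determined by the set $\{(a,b): x^a\approx x^b \text{ holds in it}\}$, which is a monoid congruence on $(\mathbb N,+)$; equivalently, it is determined by its free monogenic monoid $M$, the corresponding quotient of $\mathbb N$. When $\mathbf V$ is periodic, $M$ is finite.

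Next I would recall the well-known description of a finite monogenic commutative monoid: it has the form $C(k,p)=\{1,g,g^2,\dots,g^{k+p-1}\}$ with $g^{k+p}=g^{k}$, where $k\ge 0$ and $p\ge 1$ are chosen minimal with this property. So $\mathbf V=\var C(k,p)$ for a uniquely determined pair $(k,p)$, and it remains only to identify this variety. Reading off the defining relation directly gives $\var C(0,p)=\mathbf A_p$ (in particular $\var C(0,1)=\mathbf T$), and $\var C(k,1)$ equals $\mathbf T$, $\mathbf{SL}$, or $\mathbf C_k$ according as $k=0$, $k=1$, or $k\ge 2$. The remaining task is the general case $k\ge 1$, $p\ge 1$: I claim $\var C(k,p)=\mathbf A_p\vee\mathbf M$, where $\mathbf M=\mathbf{SL}$ if $k=1$ and $\mathbf M=\mathbf C_k$ if $k\ge 2$. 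Taking $n=p$ (and using $\mathbf A_1=\mathbf T$) this is exactly the assertion of the lemma.

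For the inclusion $\mathbf A_p\vee\mathbf M\subseteq\var C(k,p)$ I would exhibit generators of $\mathbf A_p$ and of $\mathbf M$ as quotients of $C(k,p)$: the map $g^{i}\mapsto i\bmod p$ sends $C(k,p)$ onto the cyclic group of order $p$, and collapsing the cycle $\{g^{k},\dots,g^{k+p-1}\}$ to a single idempotent sends $C(k,p)$ onto $C(k,1)$; in each case the relation $g^{k+p}=g^{k}$ is respected, so these are homomorphic images. For the reverse inclusion I would apply the letterwise principle again: since both sides are commutative monoid varieties, it is enough to show that an identity $x^{a}\approx x^{b}$ holds in both $\mathbf A_p$ and $\mathbf M$ precisely when it holds in $C(k,p)$; and indeed the former condition says exactly that $a\equiv b\pmod p$ and ($a=b$ or $a,b\ge k$), which is the condition for $g^{a}=g^{b}$ in $C(k,p)$.

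The conceptually substantive step is the letterwise reduction together with the resulting bijection between commutative monoid varieties and monoid congruences on $\mathbb N$ — this is where the nullary operation is essential, and it is the feature that makes $\mathbb{MON}$ so much more tractable here than the lattice of commutative semigroup varieties. After that the argument is largely bookkeeping on cyclic monoids; the only points demanding care are the degenerate cases $k\in\{0,1\}$ and $p=1$ (where the varieties $\mathbf T$, $\mathbf{SL}$ and $\mathbf A_p$ appear and partly coincide) and phrasing the final identification uniformly so that these edge cases are subsumed.
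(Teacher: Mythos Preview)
Your argument is correct. The paper does not actually prove this lemma: it simply cites the result from Head~\cite{Head-68} without giving any argument, so there is no ``paper's own proof'' to compare against. What you have written is essentially a self-contained reconstruction of Head's classification: the reduction of a commutative monoid variety to its one-variable identities via substitution of the unit, the identification of the free monogenic object as a cyclic monoid $C(k,p)$, and the verification that $\var C(k,p)=\mathbf A_p\vee\mathbf M$ by matching one-variable identities on both sides. All steps check out, including the edge cases you flagged.
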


The \emph{content} of a word \textbf w, i.e., the set of all letters occurring in $\bf w$ is denoted by $\con({\bf w})$. The following statement is well known and can be easily verified (see~\cite[Lemma~2.1]{Gusev-Vernikov-17+}, for instance).

\begin{lemma}
\label{group variety}
For a monoid variety $\mathbf V$, the following are equivalent:
\begin{itemize}
\item[\textup{a)}] $\mathbf V$ is a group variety;
\item[\textup{b)}] $\mathbf V$ satisfies an identity ${\bf u}\approx {\bf v}$ with $\con({\bf u})\ne \con({\bf v})$;
\item[\textup{c)}] $\mathbf{SL\nsubseteq V}$.
\end{itemize}
\end{lemma}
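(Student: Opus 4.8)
\emph{Proof proposal.} The plan is to verify the implications (a) $\Rightarrow$ (c) $\Rightarrow$ (b) $\Rightarrow$ (a). The implication (a) $\Rightarrow$ (c) is immediate: the two-element semilattice monoid $S=\{1,e\}$ with $e^2=e$ lies in $\mathbf{SL}$ but is not a group, so if every member of $\mathbf V$ is a group then $S\notin\mathbf V$, whence $\mathbf{SL}\nsubseteq\mathbf V$ (recall that $\mathbf{SL}=\var\{x^2\approx x,\ xy\approx yx\}$ is generated by $S$).

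For (c) $\Rightarrow$ (b) I would argue contrapositively at the level of identities. Since $\mathbf{SL}$ is generated by $S$, the hypothesis $\mathbf{SL}\nsubseteq\mathbf V$ gives $S\notin\mathbf V$, so $\mathbf V$ satisfies some identity ${\bf u}\approx{\bf v}$ that fails in $S$. On the other hand, a direct inspection of $S$ shows that under any substitution of the letters into $\{1,e\}$ the value of a word ${\bf w}$ equals $e$ precisely when some letter of $\con({\bf w})$ is sent to $e$; consequently every identity ${\bf u}\approx{\bf v}$ with $\con({\bf u})=\con({\bf v})$ holds in $S$. Hence the identity produced above must have $\con({\bf u})\ne\con({\bf v})$, which is exactly (b).

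Finally, for (b) $\Rightarrow$ (a), suppose $\mathbf V$ satisfies ${\bf u}\approx{\bf v}$ and, without loss of generality, that some letter $t$ lies in $\con({\bf u})\setminus\con({\bf v})$. Substituting the unit element for every letter different from $t$ turns this identity into $t^k\approx 1$, where $k\ge 1$ is the number of occurrences of $t$ in ${\bf u}$ (the right-hand side collapses to the empty word because it does not involve $t$). Thus every monoid in $\mathbf V$ satisfies $x^k\approx 1$, so each element $a$ has the two-sided inverse $a^{k-1}$; hence every member of $\mathbf V$ is a group and $\mathbf V$ is a group variety (when $k=1$ the variety $\mathbf V$ is trivial, which is still a group variety). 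None of the steps presents a genuine difficulty — the statement is semigroup folklore — and the only points requiring mild care are the elementary description of when an identity holds in $S$, used in (c) $\Rightarrow$ (b), and the check in (b) $\Rightarrow$ (a) that the chosen substitution indeed sends ${\bf v}$ to the empty word while sending ${\bf u}$ to a nontrivial power of $t$.
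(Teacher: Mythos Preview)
Your argument is correct: each of the three implications is valid as written, and the only subtle points --- that an identity $\mathbf u\approx\mathbf v$ holds in the two-element semilattice monoid precisely when $\con(\mathbf u)=\con(\mathbf v)$, and that substituting $1$ for all letters but $t$ yields a law $x^k\approx 1$ --- are handled cleanly.

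There is no proof in the paper to compare against: the lemma is stated as well known and easily verified, with a reference to~\cite[Lemma~2.1]{Gusev-Vernikov-17+} in lieu of an argument. Your write-up is exactly the standard folklore verification that the paper chose to omit.
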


A letter is called \emph{simple in a word} $\bf w$ if it occurs in $\bf w$ only once. The following fact is well known and may be easily verified.

\begin{lemma}
\label{word problem C_2}
A non-trivial identity ${\bf u}\approx {\bf v}$ holds in the variety ${\bf C}_2$ if and only if $\con(\mathbf u)=\con(\mathbf v)$ and the set of all simple in $\mathbf u$ letters coincides with the set of all simple in $\mathbf v$ letters.
\end{lemma}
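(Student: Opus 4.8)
The plan is to reduce words to a normal form modulo $\mathbf C_2$. From the identity $x^2\approx x^3$ one obtains $x^n\approx x^2$ for every $n\ge 2$ by an immediate induction, and together with commutativity this shows that modulo $\mathbf C_2$ an arbitrary word $\mathbf w$ is equal to the product, taken in any fixed order on the alphabet, of all letters simple in $\mathbf w$ and of the squares of all letters of $\con(\mathbf w)$ that are not simple in $\mathbf w$. This normal form is completely determined by $\con(\mathbf w)$ together with the set of letters simple in $\mathbf w$. The ``if'' part follows immediately: if $\con(\mathbf u)=\con(\mathbf v)$ and the sets of letters simple in $\mathbf u$ and in $\mathbf v$ coincide, then $\mathbf u$ and $\mathbf v$ reduce to the same normal form, so the identity $\mathbf u\approx\mathbf v$ holds in $\mathbf C_2$.

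For the ``only if'' part I would argue by contraposition with the help of two small monoids lying in $\mathbf C_2$. If $\con(\mathbf u)\ne\con(\mathbf v)$, then one can either note that the two-element semilattice monoid belongs to $\mathbf C_2$, so that $\mathbf{SL}\subseteq\mathbf C_2$ and Lemma~\ref{group variety} forbids such an identity in $\mathbf C_2$, or exhibit directly the substitution sending a letter from the symmetric difference of the two contents to the zero of the two-element semilattice and every other letter to the identity element. If the contents coincide but some letter $x$ is simple in exactly one of $\mathbf u,\mathbf v$, take instead the three-element monoid $N=\{1,a,a^2\}$ in which $a^3=a^2$; it is commutative and satisfies $x^2\approx x^3$, hence $N\in\mathbf C_2$, and the substitution $x\mapsto a$, every other letter $\mapsto 1$, evaluates the side in which $x$ is simple to $a$ and the other side to $a^k$ with $k\ge 2$, i.e. to $a^2\ne a$. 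In either case the identity $\mathbf u\approx\mathbf v$ fails in $\mathbf C_2$.

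I do not expect a genuine obstacle here: the only things requiring routine care are the inductive passage from $x^2\approx x^3$ to $x^n\approx x^2$, applying commutativity before collapsing powers, and checking that the two test monoids really lie in $\mathbf C_2$. The hypothesis that the identity be non-trivial is not actually needed (a trivial identity $\mathbf w\approx\mathbf w$ satisfies both sides of the asserted equivalence vacuously), so I would simply run the argument for an arbitrary identity $\mathbf u\approx\mathbf v$.
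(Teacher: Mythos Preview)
Your argument is correct. The normal-form reduction for the ``if'' direction is straightforward, and the two separating monoids you exhibit for the ``only if'' direction---the two-element semilattice for content and the three-element monoid $\{1,a,a^2\}$ with $a^3=a^2$ for the simple letters---do lie in $\mathbf C_2$ and distinguish the words as claimed.

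There is nothing to compare against: the paper does not prove this lemma but merely records it as ``well known and may be easily verified''. Your proof is exactly the kind of elementary verification the paper is alluding to, so it is entirely appropriate. Your closing remark that the non-triviality hypothesis is superfluous is also correct.
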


Put $\mathbf D = \var\{x^2\approx x^3,x^2y\approx xyx\approx yx^2\}$.

\begin{lemma}[{\rm\cite[Lemma~2.14]{Gusev-Vernikov-17+}}]
\label{non-cr and non-commut}
If a variety of monoids ${\bf V}$ is non-completely regular and non-commutative then ${\bf D}\subseteq {\bf V}$.
\end{lemma}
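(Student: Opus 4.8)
The goal is to prove Lemma~\ref{non-cr and non-commut}: if a monoid variety $\mathbf V$ is neither completely regular nor commutative, then $\mathbf D\subseteq\mathbf V$, where $\mathbf D=\var\{x^2\approx x^3,\ x^2y\approx xyx\approx yx^2\}$.

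\medskip

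The plan is to produce a monoid in $\mathbf V$ that generates $\mathbf D$ (or, equivalently, to show every identity of $\mathbf V$ holds in $\mathbf D$ — but the concrete-monoid route is cleaner). First I would exploit non-complete-regularity: a monoid variety fails to be completely regular exactly when it contains a monoid with a non-regular element, and a standard argument shows this forces $\mathbf V$ to contain the two-element monoid $\{1,a\}$ with $a^2=a^3\ne a$ is impossible for a \emph{single} generator, so instead one shows $\mathbf V$ must violate $x\approx x^{n+1}$ for all $n$; concretely, $\mathbf V$ is non-completely-regular iff it does not satisfy any identity of the form $x\approx x^{n+1}$, which (since monoid varieties are by Proposition~\ref{MON sublattice of SEM} faithfully embedded in $\mathbb{SEM}$, and using that a non-completely-regular monoid variety cannot satisfy $x\approx x^{n+1}$) yields that the free monoid $F^1$ modulo the identities of $\mathbf V$ has an element that is not a unit-regular; more usefully, it gives that $\mathbf V$ does not satisfy $x\approx x^2$ on any ``small'' piece, so we can find $x^2\not\approx x^{k}$ for the relevant small $k$ unless $x^2\approx x^3$ already holds. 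The cleanest extraction I expect: non-complete-regularity of $\mathbf V$ means $\mathbf V\nsubseteq\mathbf{CR}$, and combined with non-commutativity it should follow that $\mathbf V$ satisfies no identity $\mathbf u\approx\mathbf v$ that would force either $x^2\approx x^3$ to fail or one of $x^2y\approx xyx\approx yx^2$ to fail. So I would argue the contrapositive shape: assume $\mathbf D\nsubseteq\mathbf V$; then $\mathbf V$ satisfies some identity not holding in $\mathbf D$, and I would analyze which identities those are.

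\medskip

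The core combinatorial step is to understand the identities of $\mathbf D$ well enough to do this case analysis. Here I would establish (or cite from the machinery around Lemma~\ref{word problem C_2}) a solution to the word problem in $\mathbf D$: an identity $\mathbf u\approx\mathbf v$ holds in $\mathbf D$ iff $\con(\mathbf u)=\con(\mathbf v)$, the simple letters of $\mathbf u$ and $\mathbf v$ coincide and occupy the same positions relative to each other, and each non-simple letter occurs with multiplicity $\ge 2$ on both sides with the multiset of ``blocks'' matching appropriately — essentially $\mathbf D$ collapses multiplicities of repeated letters to exactly $2$ and allows repeated letters to commute past everything once they appear squared. Given such a description, an identity \emph{fails} in $\mathbf D$ only if it either (a) changes content, or (b) changes the set of simple letters, or (c) relates two words where some letter is simple on one side but repeated on the other, or (d) genuinely reorders repeated letters in a way the $x^2y\approx xyx\approx yx^2$ relations do not sanction, or (e) equates $x$ with a higher power $x^{k}$, $k\ge 2$. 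Then: case (e) together with its consequences contradicts non-complete-regularity (such an identity makes $\mathbf V$ completely regular — indeed a group variety if content also changes, via Lemma~\ref{group variety}); cases (a), (b), (c) likewise force $\mathbf V$ to be completely regular by a deletion/substitution argument (substituting $1$ for letters reduces such an identity to one of the form $x\approx x^{k}$ or $x\approx 1$); and the only remaining possibility, a ``reordering'' identity, must — if $\mathbf V$ is to be non-commutative — still not be strong enough to derive commutativity, and a short argument shows that any reordering identity compatible with $\mathbf V$ being non-commutative is already a consequence of $\{x^2\approx x^3,\ x^2y\approx xyx\approx yx^2\}$, so it holds in $\mathbf D$ after all, contradiction.

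\medskip

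The main obstacle I anticipate is step three: pinning down exactly the set of identities failing in $\mathbf D$ and showing the ``reordering'' ones are incompatible with non-commutativity of $\mathbf V$. This is where one must be careful, since $\mathbf D$ is not commutative itself (it satisfies $x^2y\approx yx^2$ but not $xy\approx yx$), so the dichotomy ``either $\mathbf V$ is commutative or it satisfies only $\mathbf D$-valid reorderings'' needs a genuine argument rather than a counting triviality; I would handle it by taking a hypothetical $\mathbf V$-identity $\mathbf u\approx\mathbf v$ with $\con(\mathbf u)=\con(\mathbf v)$, same simple letters in same relative order, but $\mathbf u\ne\mathbf v$ in $\mathbf D$, and showing that substituting $1$ for all but two well-chosen letters (one simple-on-neither-side, i.e.\ repeated) yields $xy\approx yx$ or a power identity, forcing $\mathbf V$ commutative or completely regular. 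The bookkeeping of ``which substitution isolates the offending transposition'' is the delicate part; everything else is routine given Proposition~\ref{MON sublattice of SEM}, Lemma~\ref{group variety} and Lemma~\ref{word problem C_2}.
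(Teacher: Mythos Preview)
The paper does not prove this lemma; it is quoted without proof from \cite[Lemma~2.14]{Gusev-Vernikov-17+}, so there is no in-paper argument to compare against. Your overall contrapositive strategy---solve the word problem for $\mathbf D$, then show that any identity failing in $\mathbf D$ forces $\mathbf V$ to be completely regular or commutative---is sound and is the natural route.

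Your execution, however, has a real gap: your description of the word problem for $\mathbf D$ is wrong. The clause about ``the multiset of blocks matching appropriately'' for non-simple letters is spurious. In $\mathbf D$ non-simple letters are \emph{completely free}: using $x^2\approx x^3$ and $x^2y\approx xyx\approx yx^2$ one reduces every word to the form $(\text{simple letters in their original order})\cdot\prod a^2$, the product ranging over the non-simple letters. Hence $\mathbf u\approx\mathbf v$ holds in $\mathbf D$ iff $\con(\mathbf u)=\con(\mathbf v)$, the sets of simple letters coincide, and those simple letters appear in the same order---nothing more. Consequently your case~(d) is vacuous, and the scenario you flag as the ``main obstacle'' (same content, same simple letters in the same relative order, yet failing in $\mathbf D$) cannot occur. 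The case you are actually missing is the one where two \emph{simple} letters $a,b$ occur in opposite orders in $\mathbf u$ and $\mathbf v$; substituting $1$ for every other letter yields $ab\approx ba$, hence commutativity. Your proposed substitution, which keeps a repeated letter, would instead produce an identity like $x^ky\approx x^iyx^j$, which already holds in $\mathbf D$ and proves nothing. With the correct word problem the whole argument collapses to three lines: content change $\Rightarrow$ group variety (Lemma~\ref{group variety}) $\Rightarrow$ completely regular; simple-set change $\Rightarrow$ $x\approx x^k$ after erasing the other letters $\Rightarrow$ completely regular; simple-order change $\Rightarrow$ $xy\approx yx$ $\Rightarrow$ commutative.
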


To avoid a confusion, we note that, in~\cite{Gusev-Vernikov-17+}, the variety $\mathbf D$ is denoted by $\mathbf D_1$, while $\mathbf D$ denotes another variety.

\begin{lemma}[{\rm\cite[Lemma~2.6]{Vernikov-08a}}]
\label{maximal group subvariety}
If $\mathbf V$ is a semigroup variety that satisfies the identity $x^n\approx x^{n+1}$ for some $n$ and $\mathbf G$ is a variety of periodic groups then $\mathbf G$ is the largest group subvariety of $\mathbf G\vee\mathbf X$.
\end{lemma}

\section{On modular and lower-modular elements\\
in the lattice $\mathbb{MON}$}
\label{section on modular elements}

The assertions provided in this section will be used in the proof of Theorems~\ref{neutral theorem} and~\ref{costandard theorem} and Proposition~\ref{alternative}. The following assertion was communicated to the author by M.V.~Volkov. But the proof given above is found by the author.

\begin{lemma}
\label{comm comp-reg}
Let $\bf V$ be a non-commutative completely regular monoid variety. Then
\[
\mathbf C_2\vee(\mathbf D \wedge \mathbf V)= \mathbf C_2 \subset \mathbf D = \mathbf D\wedge(\mathbf C_2 \vee \mathbf V).
\]
In particular, $\bf V$ is not a modular element of the lattice $\mathbb{MON}$ and $\mathbf C_2$ is not a lower-modular element of this lattice.
\end{lemma}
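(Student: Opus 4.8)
The plan is to establish the chain of (in)equalities
\[
\mathbf C_2\vee(\mathbf D\wedge\mathbf V)=\mathbf C_2\subset\mathbf D=\mathbf D\wedge(\mathbf C_2\vee\mathbf V),
\]
and then read off the failure of modularity and lower-modularity directly from the definitions. First I would compute the meet $\mathbf D\wedge\mathbf V$. Since $\mathbf V$ is completely regular it satisfies $x\approx x^{n+1}$ for some $n$, whereas $\mathbf D$ satisfies $x^2\approx x^3$; hence the meet satisfies $x\approx x^2$, so it is idempotent. An idempotent monoid variety contained in $\mathbf D$ must satisfy $x^2y\approx xyx\approx yx^2$ together with $x\approx x^2$, which forces commutativity and idempotency, so $\mathbf D\wedge\mathbf V\subseteq\mathbf{SL}$ (using Lemma~\ref{group variety} to exclude the case where the meet is a non-trivial group variety — it cannot be, as it contains only idempotent monoids and is nontrivial only if it contains $\mathbf{SL}$). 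Thus $\mathbf D\wedge\mathbf V$ is either $\mathbf T$ or $\mathbf{SL}$, and in either case it is contained in $\mathbf C_2$; therefore $\mathbf C_2\vee(\mathbf D\wedge\mathbf V)=\mathbf C_2$. The strict inclusion $\mathbf C_2\subset\mathbf D$ is immediate: $\mathbf D$ satisfies $x^2\approx x^3$ but is non-commutative (it does not satisfy $xy\approx yx$), while $\mathbf C_2$ is commutative, and conversely every identity of $\mathbf C_2$ — which by Lemma~\ref{word problem C_2} only depends on content and the set of simple letters — holds in $\mathbf D$.

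The substantive step is the identity $\mathbf D=\mathbf D\wedge(\mathbf C_2\vee\mathbf V)$, equivalently $\mathbf D\subseteq\mathbf C_2\vee\mathbf V$. So I must show that every identity $\mathbf u\approx\mathbf v$ holding in both $\mathbf C_2$ and $\mathbf V$ also holds in $\mathbf D$. The defining identities of $\mathbf D$ are $x^2\approx x^3$ and $x^2y\approx xyx\approx yx^2$; it suffices to derive each of these from $\var\{\mathbf C_2\}\cap\var\{\mathbf V\}$-identities, or more precisely to show a word identity true in $\mathbf C_2$ and in $\mathbf V$ but needed to generate $\mathbf D$ — but it is cleaner to argue semantically. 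Take any identity $\mathbf u\approx\mathbf v$ satisfied by $\mathbf C_2\vee\mathbf V$. Since $\mathbf V$ is non-commutative and completely regular, I would invoke the structure of such varieties: $\mathbf V$ contains a non-abelian completely regular monoid, and in particular (as $\mathbf V$ is non-commutative) there is a letter-pair witnessing non-commutativity. The key combinatorial claim is that an identity holding in $\mathbf C_2$ (so: equal content, equal sets of simple letters, by Lemma~\ref{word problem C_2}) and also in $\mathbf V$ must already identify $\mathbf u$ and $\mathbf v$ as words modulo the congruence generated by $x^2\approx x^3$ and the permutation of square-blocks captured by $x^2y\approx xyx\approx yx^2$ — which is precisely the word problem of $\mathbf D$. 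I expect this to follow by taking a suitable substitution into a non-abelian member of $\mathbf V$ to force that non-simple letters in $\mathbf u$ and $\mathbf v$ occur "in the same relative order up to squares", combined with the $\mathbf C_2$-information controlling simple letters and content.

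The main obstacle is exactly this last combinatorial analysis: extracting enough order information from a non-commutative completely regular variety $\mathbf V$ to pin down the word problem of $\mathbf D$, without knowing $\mathbf V$ explicitly. I would handle it by reducing to the free object on two letters, using that $\mathbf V$ non-commutative means $xy\not\approx yx$ in $\mathbf V$, and then carefully choosing evaluations (sending most letters to the identity, a few to generators of a non-abelian group or a non-abelian completely regular monoid) to test hypothetical identities of $\mathbf C_2\vee\mathbf V$; any such identity surviving all these tests must be a consequence of $\mathbf D$'s defining relations. Once $\mathbf D\subseteq\mathbf C_2\vee\mathbf V$ is in hand, the final sentence is formal: the chain shows $(\mathbf C_2\vee(\mathbf D\wedge\mathbf V))=\mathbf C_2\ne\mathbf D=(\mathbf D\wedge(\mathbf C_2\vee\mathbf V))$ with $\mathbf D\wedge\mathbf V\le\mathbf D$, so taking $\mathbf X=\mathbf V$, the roles $\mathbf Y=\mathbf D\wedge\mathbf V\le\mathbf D=\mathbf Z$ violate the modular law at $\mathbf V$, and symmetrically with $\mathbf C_2$ in the upper position the same equation read as a lower-modularity instance (the dual, with $\mathbf C_2$ playing the role of the fixed element and $\mathbf D\wedge\mathbf V\le\mathbf D$) shows $\mathbf C_2$ is not lower-modular.
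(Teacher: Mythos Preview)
Your computation of $\mathbf D\wedge\mathbf V\subseteq\mathbf{SL}$ and hence $\mathbf C_2\vee(\mathbf D\wedge\mathbf V)=\mathbf C_2$ is fine and matches the paper's argument (the paper phrases it as: the subvariety lattice of $\mathbf D$ is the chain $\mathbf T\subset\mathbf{SL}\subset\mathbf C_2\subset\mathbf D$, and $\mathbf{SL}$ is its largest completely regular subvariety).

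The gap is in your ``substantive step'' $\mathbf D\subseteq\mathbf C_2\vee\mathbf V$. You set out to analyse the word problem of $\mathbf C_2\vee\mathbf V$ by hand, testing identities via substitutions into ``a non-abelian group or a non-abelian completely regular monoid'' inside $\mathbf V$. This is both unfinished and fragile: a non-commutative completely regular monoid variety need not contain any non-abelian group (it could, for instance, be a variety of band monoids), so the specific evaluation strategy you sketch may simply not be available. More importantly, the entire combinatorial detour is unnecessary. The paper dispatches this step in one line using Lemma~\ref{non-cr and non-commut}: the variety $\mathbf C_2\vee\mathbf V$ is non-completely regular (it contains $\mathbf C_2$) and non-commutative (it contains $\mathbf V$), so by that lemma $\mathbf D\subseteq\mathbf C_2\vee\mathbf V$, and hence $\mathbf D=\mathbf D\wedge(\mathbf C_2\vee\mathbf V)$.

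In effect, the combinatorial analysis you are attempting \emph{is} a reproof of (a special case of) Lemma~\ref{non-cr and non-commut}; once you recognise that, you should invoke it rather than redo it. Your final paragraph reading off the failure of modularity for $\mathbf V$ (with $\mathbf Y=\mathbf C_2\subseteq\mathbf D=\mathbf Z$) and of lower-modularity for $\mathbf C_2$ (with $\mathbf C_2\subseteq\mathbf D$, $\mathbf Z=\mathbf V$) is correct.
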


\begin{proof}
It easily follows from Lemmas~4.4 and~4.5(ii) of~\cite{Jackson-05}, for instance, that the subvariety lattice of the variety $\bf D$ is the chain $\mathbf{T\subset SL\subset C}_2\subset\mathbf D$. This fact implies that the variety $\mathbf{SL}$ is a maximal completely regular subvariety of the variety $\mathbf D$. The variety $\mathbf V \wedge \mathbf D$ is completely regular. Hence $\mathbf D \wedge \mathbf V\subseteq\bf SL$. Thus, $\mathbf C_2\vee(\mathbf D \wedge \mathbf V) =\mathbf C_2$. On the other hand, the variety $\mathbf C_2 \vee \mathbf V$ is non-completely regular and non-commutative because $\mathbf V$ is non-commutative and $\mathbf C_2$ is non-completely regular. Then Lemma~\ref{non-cr and non-commut} implies that $\mathbf D \subseteq \mathbf V \vee \mathbf C_2$, whence $\mathbf D = \mathbf D\wedge (\mathbf C_2 \vee \mathbf V)$.
\end{proof}

A word \textbf w is called an \emph{isoterm} for a class of monoids if each monoid in the class does not satisfy any non-trivial identity of the form $\mathbf w\approx\mathbf w'$. The empty word~(i.e., the unit element of the monoid $F^1$) is denoted by $\lambda$. Let us fix notation for the following two words: 
\[
\mathbf s=yxyzxz\ \text{ and }\ \mathbf t=yxzxyxz.
\]
Put $\mathbf B_{2,3} = \var\{x^2\approx x^3\}$ and $\mathbf Q = \var\{\mathbf s\approx \mathbf t\}$. Clearly, $\mathbf Q \subset \mathbf B_{2,3}$.

\begin{lemma}
\label{V is commutative and non-combinatorial}
If $\mathbf V$ is a commutative monoid variety containing a non-trivial group then
\[
\mathbf Q\vee(\mathbf B_{2,3} \wedge \mathbf V) \subset \mathbf B_{2,3}\wedge(\mathbf Q \vee \mathbf V).
\]
In particular, $\bf V$ is not a modular element of the lattice $\mathbb{MON}$ and $\mathbf Q$ is not a lower-modular element of this lattice.
\end{lemma}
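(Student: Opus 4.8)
The plan, following the pattern of Lemma~\ref{comm comp-reg}, is to first identify the left-hand side with $\mathbf Q$ and then prove that the inclusion on the right is strict. For the first part, observe that $\mathbf B_{2,3}\wedge\mathbf V$ lies in $\mathbf B_{2,3}=\var\{x^2\approx x^3\}$ and in the commutative variety $\mathbf V$, hence in $\var\{x^2\approx x^3,\,xy\approx yx\}=\mathbf C_2$; and since $\con(\mathbf s)=\con(\mathbf t)=\{x,y,z\}$ while neither $\mathbf s$ nor $\mathbf t$ has a simple letter, Lemma~\ref{word problem C_2} gives $\mathbf C_2\models\mathbf s\approx\mathbf t$, that is, $\mathbf C_2\subseteq\mathbf Q$. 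Therefore $\mathbf B_{2,3}\wedge\mathbf V\subseteq\mathbf Q$ and $\mathbf Q\vee(\mathbf B_{2,3}\wedge\mathbf V)=\mathbf Q$. As $\mathbf Q\subseteq\mathbf B_{2,3}$ and $\mathbf Q\subseteq\mathbf Q\vee\mathbf V$, the inclusion $\mathbf Q\subseteq\mathbf B_{2,3}\wedge(\mathbf Q\vee\mathbf V)$ is automatic, so only its strictness remains to be shown.

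To prove strictness, I would first reduce to $\mathbf V=\mathbf A_p$: the variety $\mathbf V$ contains a non-trivial abelian group, hence $\mathbf A_p$ for some prime $p$, and $\mathbf B_{2,3}\wedge(\mathbf Q\vee\mathbf V)\supseteq\mathbf B_{2,3}\wedge(\mathbf Q\vee\mathbf A_p)$; so it is enough to produce a monoid in $\mathbf B_{2,3}\wedge(\mathbf Q\vee\mathbf A_p)$ that is not in $\mathbf Q$. My candidate is the monoid $M(\mathbf s)$ of all factors of $\mathbf s=yxyzxz$ with a zero adjoined, the product of two factors being their concatenation when that is again a factor of $\mathbf s$ and $0$ otherwise (equivalently, the Rees quotient of the free monoid on $\{x,y,z\}$ by the ideal of non-factors of $\mathbf s$). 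Since $\mathbf s$ is square-free, $a^2=0=a^3$ for every $a\in M(\mathbf s)$ with $a\ne 1$, so $M(\mathbf s)\in\mathbf B_{2,3}$; and $\mathbf s$ is an isoterm for $M(\mathbf s)$ (substituting each letter by itself sends $\mathbf s$ to the top element of $M(\mathbf s)$ and any other word to a different element), so $M(\mathbf s)\not\models\mathbf s\approx\mathbf t$ and hence $M(\mathbf s)\notin\mathbf Q$. The monoid we seek is necessarily non-commutative, for every commutative monoid in $\mathbf B_{2,3}$ satisfies $x^2y^2z^2\approx x^3y^2z^2$, i.e. $\mathbf s\approx\mathbf t$, and thus lies in $\mathbf Q$; and indeed $M(\mathbf s)$ is non-commutative, since $xy$ and $yx$ are both factors of $\mathbf s$.

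The step I expect to be the main obstacle is showing $M(\mathbf s)\in\mathbf Q\vee\mathbf A_p$, i.e. that $M(\mathbf s)$ satisfies every identity valid in both $\mathbf Q$ and $\mathbf A_p$; by the previous paragraph this genuinely uses the non-commutativity of $\mathbf Q$. I would proceed combinatorially: an identity $\mathbf u\approx\mathbf v$ fails in $M(\mathbf s)$ exactly when some substitution of the letters of $\mathbf u$ by words carries $\mathbf u$ onto a factor of $\mathbf s$ and $\mathbf v$ onto a different element of $M(\mathbf s)$, and one must check that any such $\mathbf u\approx\mathbf v$ already fails in $\mathbf Q$ or in $\mathbf A_p$. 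The key facts to exploit are that $\mathbf s$ is $2$-uniform (each letter occurs exactly twice) and that the identities of $\mathbf Q$ equating $\mathbf s$ with another word are highly restricted --- in essence only $\mathbf s\approx\mathbf t$ itself, whose two sides have different multiplicities of $x$ and so cannot be identified in any non-trivial abelian group variety. An equivalent route is to realise $M(\mathbf s)$ as a homomorphic image of a submonoid of $F\times G$, where $F$ is a relatively free monoid of $\mathbf Q$ and $G$ is a suitable finite group in $\mathbf A_p$, the delicate point being that the congruence used does not identify the images of $\mathbf s$ and $\mathbf t$. (One could use $\mathbf t$ and $M(\mathbf t)$ in place of $\mathbf s$ and $M(\mathbf s)$.)

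Finally, the ``in particular'' assertions follow at once from the strict inclusion. Were $\mathbf V$ a modular element, the modular law applied with $y=\mathbf Q$ and $z=\mathbf B_{2,3}$ (permissible since $\mathbf Q\subseteq\mathbf B_{2,3}$) would give $\mathbf B_{2,3}\wedge(\mathbf Q\vee\mathbf V)=(\mathbf V\vee\mathbf Q)\wedge\mathbf B_{2,3}=(\mathbf V\wedge\mathbf B_{2,3})\vee\mathbf Q=\mathbf Q\vee(\mathbf B_{2,3}\wedge\mathbf V)$, contradicting strictness; and were $\mathbf Q$ a lower-modular element, the lower-modular law applied with the variety $\mathbf B_{2,3}\supseteq\mathbf Q$ and the variety $\mathbf V$ would yield the same equality, again a contradiction.
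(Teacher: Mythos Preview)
Your reduction of the left-hand side to $\mathbf Q$ and your derivation of the ``in particular'' clauses match the paper. For the strict inclusion you take a semantic route via the factor monoid $M(\mathbf s)$, whereas the paper argues syntactically: it shows directly that $\mathbf s\approx\mathbf t$ fails in $\mathbf B_{2,3}\wedge(\mathbf Q\vee\mathbf V)$. Any derivation of $\mathbf s\approx\mathbf t$ from the identities of $\mathbf B_{2,3}$ together with those of $\mathbf Q\vee\mathbf V$ must start with a step $\mathbf s\approx\mathbf w_1$ valid in $\mathbf Q\vee\mathbf V$ (since $\mathbf s$ is an isoterm for $\mathbf B_{2,3}$), hence valid in $\mathbf Q$. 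A case analysis of one-step deductions from $\mathbf s\approx\mathbf t$---using that $\mathbf s$ and $\mathbf t$ are square-free while every image $\eta(\mathbf s)$ or $\eta(\mathbf t)$ under an endomorphism erasing at least one of $x,y,z$ contains a square---shows that the $\mathbf Q$-equivalence class of $\mathbf s$ is exactly $\{\mathbf s,\mathbf t\}$. Hence $\mathbf w_1=\mathbf t$, so $\mathbf s\approx\mathbf t$ holds in $\mathbf V$; substituting $y,z\mapsto 1$ gives $x^2\approx x^3$ in $\mathbf V$, contradicting the presence of a non-trivial group.

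Your plan is sound, but it is strictly more work than the paper's and its crux is only sketched. The membership $M(\mathbf s)\in\mathbf Q\vee\mathbf A_p$ is equivalent to \emph{every} factor of $\mathbf s$ being an isoterm for $\mathbf Q\vee\mathbf A_p$, not just $\mathbf s$ itself; the proper factors are indeed easy (they are square-free of length $\le 5$, so no nontrivial instance of $\mathbf s$ or $\mathbf t$ fits), but you still need the core fact that the $\mathbf Q$-class of $\mathbf s$ is $\{\mathbf s,\mathbf t\}$, which you assert rather than prove. Once that fact is established, the detour through $M(\mathbf s)$ is unnecessary: knowing that $\mathbf s$ is an isoterm for both $\mathbf B_{2,3}$ and $\mathbf Q\vee\mathbf V$ already prevents any derivation in the meet from leaving $\mathbf s$, which gives the strict inclusion immediately. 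What your approach buys is an explicit witnessing monoid; what the paper's buys is a shorter proof with no auxiliary construction.
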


\begin{proof}
In view of Lemma~\ref{word problem C_2}, $\mathbf C_2\subseteq \mathbf Q$. Since is $\bf V$ is commutative, we have that $\mathbf B_{2,3}\wedge \mathbf V\subseteq \mathbf C_2$. Therefore,
\[
\mathbf Q\vee(\mathbf B_{2,3}\wedge \mathbf V)=\mathbf Q\subseteq \mathbf B_{2,3}\wedge(\mathbf Q\vee \mathbf V).
\]
We are going to verify that this inclusion is strict. It suffices to establish that $\mathbf B_{2,3}\wedge(\mathbf Q \vee \mathbf V)$ violates $\mathbf s\approx \mathbf t$. If, otherwise, $\mathbf B_{2,3}\wedge(\mathbf Q \vee \mathbf V)$ satisfies $\mathbf s\approx \mathbf t$ then there is a sequence of pairwise distinct words $\mathbf w_0,\mathbf w_1,\ldots, \mathbf w_k$ such that $\mathbf w_0 = \mathbf s$, $\mathbf w_k = \mathbf t$ and the identity $\mathbf w_i\approx \mathbf w_{i+1}$ holds either in $\mathbf B_{2,3}$ or in $\mathbf Q \vee \mathbf V$ for every $0\le i<k$. We note that the identity $\mathbf s\approx \mathbf w_1$ does not hold in the variety $\mathbf B_{2,3}$ because $\mathbf s$  is an isoterm for $\mathbf B_{2,3}$. Thus, this identity holds in $\mathbf Q \vee \mathbf V$. In particular, it holds in \textbf Q, whence there exists a \emph{deduction} of the identity $\mathbf s\approx \mathbf w_1$ from the identity $\mathbf s\approx \mathbf t$, i.e., a sequence of pairwise distinct words
\begin{equation}
\label{sequence of words}
\mathbf v_0,\mathbf v_1,\ldots, \mathbf v_m
\end{equation}
such that $\mathbf v_0 = \mathbf s$, $\mathbf v_m = \mathbf w_1$  and, for any $0\le i<m$, there exist words $\mathbf a_i$, $\mathbf b_i$ and endomorphism $\xi_i$ of $F^1$ such that ether $\mathbf v_i=\mathbf a_i\xi_i(\mathbf s)\mathbf b_i$ and $\mathbf v_{i+1}=\mathbf a_i\xi_i(\mathbf t)\mathbf b_i$ or $\mathbf v_i=\mathbf a_i\xi_i(\mathbf t)\mathbf b_i$ and $\mathbf v_{i+1}=\mathbf a_i\xi_i(\mathbf s)\mathbf b_i$. We can assume without loss of generality that the sequence~\eqref{sequence of words} is the shortest deduction of the identity $\mathbf s\approx \mathbf w_1$ from $\mathbf s\approx \mathbf t$. 

Let $\eta$ be an arbitrary endomorphism of the monoid $F^1$. If $\eta$ maps at least one of the letters $x$, $y$ or $z$ into the empty word then the words $\eta(\mathbf s)$ and $\eta(\mathbf t)$ have one of the forms given in Table~\ref{form of words}. We see that in all cases $\eta(\mathbf s)$ and $\eta(\mathbf t)$ contain a subword of the form $\mathbf w^2$. We note that the words $\mathbf s$ and $\mathbf t$ are square-free. This fact and the information collected in Table~\ref{form of words} imply that if the equality $\mathbf a=\mathbf c\eta(\mathbf b)\mathbf d$ holds where $\mathbf a,\mathbf b\in\{\mathbf s,\mathbf t\}$, $\mathbf c,\mathbf d\in F^1$ and $\eta$ is an endomorphism of $F^1$ that maps at least one of the letters $x$, $y$ and $z$ into the empty word then $\eta(\mathbf b)=\lambda$. This fact will be repeatedly used below to obtain a contradiction.

{\tabcolsep=4pt
\begin{table}[tbh]
\caption{The forms of the words $\eta(\mathbf s)$ and  $\eta(\mathbf t)$ depending on the endomorphism $\eta$}
\begin{tabular}{|c|c|c|c|}
\hline
&\multicolumn{3}{c|}{The form of the word if}\\
\cline{2-4}
The word&\multicolumn{1}{c|}{$\eta(x)=\lambda$}&\multicolumn{1}{c|}{$\eta(y)=\lambda$}&\multicolumn{1}{c|}{$\eta(z)=\lambda$}\\
\hline
$\eta(\mathbf s)$&$\mathbf p^2\mathbf q^2$&$(\mathbf p\mathbf q)^2$&$(\mathbf p\mathbf q)^2$\\
\hline
$\eta(\mathbf t)$&$(\mathbf p\mathbf q)^2$&$\mathbf p\mathbf q\mathbf p^2\mathbf q$&$\mathbf p\mathbf q^2\mathbf p\mathbf q$\\
\hline
\end{tabular}
\label{form of words}
\end{table}
}

Suppose first that $\mathbf s=\mathbf v_0=\mathbf a_0\xi_0(\mathbf s)\mathbf b_0$ and $\mathbf v_1=\mathbf a_0\xi_0(\mathbf t)\mathbf b_0$. If the words $\mathbf a_0$ and $\mathbf b_0$ are empty then $\xi_0(a)=a$ for each $a\in\{x,y,z\}$. Then $\mathbf v_1=\mathbf t$. Suppose now that at least one of the words $\mathbf a_0$ and $\mathbf b_0$ is non-empty. Then the endomorphism $\xi_0$ maps one of the letters $x$, $y$ and $z$ into the empty word. We have verified in the previous paragraph that $\xi_0(\mathbf s)=\xi_0(\mathbf t)=\lambda$, whence $\mathbf v_0=\mathbf v_1=\mathbf a_0\mathbf b_0$ in this case. We obtain a contradiction with the fact that the words $\mathbf v_0$ and $\mathbf v_1$ are distinct. 

Suppose now that $\mathbf s=\mathbf v_0=\mathbf a_0\xi_0(\mathbf t)\mathbf b_0$ and $\mathbf v_1=\mathbf a_0\xi_0(\mathbf s)\mathbf b_0$. Since the length of $\mathbf s$ is less than the length of $\mathbf t$, the endomorphism $\xi_0$ maps one of the letters $x$, $y$ and $z$ into the empty word. As we have seen above, $\xi_0(\mathbf t)=\lambda$ in this case. This contradicts with the inequality $\mathbf v_0\ne\mathbf v_1$. Thus, we have verified that $\mathbf v_1=\mathbf t$.

Suppose that $\mathbf t=\mathbf v_1=\mathbf a_1\xi_1(\mathbf s)\mathbf b_1$ and $\mathbf v_2=\mathbf a_1\xi_1(\mathbf t)\mathbf b_1$. Note that the number of occurrences of the letter $x$ in $\xi_1(\mathbf s)$ is not equal to $3$, whence $x\in\con(\mathbf a_1\mathbf b_1)$. Since neither the first nor the last letter of the word $\bf t$ does not coincide with $x$, the length of the word $\mathbf a_1\mathbf b_1$ is more than $1$. Then the endomorphism $\xi_1$ maps one of the letters $x$, $y$ and $z$ into the empty word. Then $\xi_1(\mathbf s)=\lambda$, whence $\mathbf v_1=\mathbf v_2$, a contradiction.

Suppose now that $\mathbf t=\mathbf v_1=\mathbf a_1\xi_1(\mathbf t)\mathbf b_1$ and $\mathbf v_2=\mathbf a_1\xi_1(\mathbf s)\mathbf b_1$. If the words $\mathbf a_1$ and $\mathbf b_1$ are empty then $\xi_1(a)=a$ for each $a\in\{x,y,z\}$. Then $\mathbf v_2=\mathbf s$. But this is impossible because the sequence~\eqref{sequence of words} is the shortest deduction of the identity $\mathbf s\approx \mathbf w_1$ from the identity $\mathbf s\approx \mathbf t$. So, at least one of the words $\mathbf a_1$ and $\mathbf b_1$ is non-empty. Then the endomorphism $\xi_1$ maps one of the letters $x$, $y$ and $z$ into the empty word. Then $\xi_1(\mathbf t)=\lambda$, whence $\mathbf v_1=\mathbf v_2$. We obtain a contradiction with the fact that the words $\mathbf v_2$ and $\mathbf v_1$ are distinct. Thus, we have proved that $m=1$ and $\mathbf v_1=\mathbf w_1=\mathbf t$. Therefore, the identity $\mathbf s\approx \mathbf t$ holds in the variety $\mathbf Q\vee \mathbf V$. Then this variety satisfies the identity $x^2\approx x^3$. But this is impossible because $\mathbf V$ contains a non-trivial group.
\end{proof}

Put $\mathbf F= \var\{xyx\approx xyx^2,x^2y^2\approx y^2x^2,x^2y\approx x^2yx, xytxy\approx yxtxy\}$. Clearly, $\mathbf F \subset \mathbf B_{2,3}$.

\begin{lemma}
\label{C_n whenever n>2}
If $n>2$ then
\[
(\mathbf C_n \wedge \mathbf B_{2,3})\vee \mathbf F \subset (\mathbf C_n \vee \mathbf F)\wedge \mathbf B_{2,3}.
\]
In particular, $\mathbf C_n$ with $n>2$ is not a modular element of the lattice $\mathbb{MON}$ and $\bf F$ is not a lower-modular element of this lattice.
\end{lemma}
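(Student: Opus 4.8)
The plan is to compute the left-hand side exactly and then to separate the two sides by a single identity, following the pattern of the proof of Lemma~\ref{V is commutative and non-combinatorial}. First I would check that $(\mathbf C_n\wedge\mathbf B_{2,3})\vee\mathbf F=\mathbf F$. The variety $\mathbf C_n\wedge\mathbf B_{2,3}$ is commutative (it lies in $\mathbf C_n$) and satisfies $x^2\approx x^3$ (it lies in $\mathbf B_{2,3}$), whence $\mathbf C_n\wedge\mathbf B_{2,3}\subseteq\mathbf C_2$; and a routine verification shows that each of the four defining identities of $\mathbf F$ holds in $\mathbf C_2$, so $\mathbf C_2\subseteq\mathbf F$. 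Hence $\mathbf C_n\wedge\mathbf B_{2,3}\subseteq\mathbf F$ and $(\mathbf C_n\wedge\mathbf B_{2,3})\vee\mathbf F=\mathbf F$. Since $\mathbf F\subseteq\mathbf C_n\vee\mathbf F$ and $\mathbf F\subseteq\mathbf B_{2,3}$, the inclusion $\mathbf F\subseteq(\mathbf C_n\vee\mathbf F)\wedge\mathbf B_{2,3}$ is clear, so it remains to verify that it is strict.

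For strictness I would use the identity $xyx\approx xyx^2$, which holds in $\mathbf F$, and prove that it fails in $\mathbf W:=(\mathbf C_n\vee\mathbf F)\wedge\mathbf B_{2,3}$. Assume the contrary. Then, just as in the proof of Lemma~\ref{V is commutative and non-combinatorial}, there is a sequence of words $\mathbf w_0,\mathbf w_1,\dots,\mathbf w_k$ with $\mathbf w_0=xyx$, $\mathbf w_k=xyx^2$ and, for every $i$, the identity $\mathbf w_i\approx\mathbf w_{i+1}$ holding either in $\mathbf B_{2,3}$ or in $\mathbf C_n\vee\mathbf F$. I claim that $\mathbf w_i=xyx$ for all $i$, which gives $xyx=xyx^2$, a contradiction. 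Indeed, $xyx$ is square-free, hence an isoterm for $\mathbf B_{2,3}$, so a step valid in $\mathbf B_{2,3}$ cannot change it; and since a join of varieties satisfies precisely the common identities of its summands, a step valid in $\mathbf C_n\vee\mathbf F$ sends $xyx$ to a word belonging both to the $\mathbf C_n$-class and to the $\mathbf F$-class of $xyx$. As $n>2$, every word $\mathbf C_n$-equivalent to $xyx$ has exactly two occurrences of $x$, one of $y$ and no other letters, so the $\mathbf C_n$-class of $xyx$ is $\{x^2y,\,xyx,\,yx^2\}$; and (as explained below) the $\mathbf F$-class of $xyx$ contains neither $x^2y$ nor $yx^2$. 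Thus such a step, too, leaves $xyx$ unchanged, and the claim follows by induction on $i$.

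It remains to prove that $\mathbf F$ satisfies neither $xyx\approx x^2y$ nor $xyx\approx yx^2$; in fact I would show that the $\mathbf F$-class of $xyx$ equals $\{xyx^m:m\ge 1\}$, and this is, I expect, the main obstacle. One inspects the four defining identities of $\mathbf F$ one by one. Applied to a word of the shape $xyx^m$, each of $x^2y^2\approx y^2x^2$, $x^2y\approx x^2yx$ and $xytxy\approx yxtxy$ can act only through an occurrence of a square factor, or of a non-empty factor occurring twice; in $xyx^m$ the only such occurrences are of powers of $x$ lying inside the block $x^m$, so the corresponding rewriting is either trivial or just changes the exponent $m$. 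The same holds for the identity $xyx\approx xyx^2$ together with its consequence $x^2\approx x^3$. Consequently the $\mathbf F$-class of $xyx$ is $\{xyx^m:m\ge 1\}$, which contains neither $x^2y$ nor $yx^2$, as needed. Finally, the ``in particular'' assertions are immediate: the defining condition for $\mathbf C_n$ to be a modular element, taken at $\mathbf F\subseteq\mathbf B_{2,3}$, and the defining condition for $\mathbf F$ to be a lower-modular element, taken at $\mathbf F\subseteq\mathbf B_{2,3}$ with the third variety $\mathbf C_n$, would each assert that the two sides of the displayed inclusion coincide, contradicting the strictness just established.
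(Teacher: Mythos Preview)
Your argument is correct and follows the paper's proof almost verbatim: both compute the left-hand side as $\mathbf F$ via $\mathbf C_n\wedge\mathbf B_{2,3}\subseteq\mathbf C_2\subseteq\mathbf F$, both separate the two sides with the identity $xyx\approx xyx^2$, use that $xyx$ is an isoterm for $\mathbf B_{2,3}$, and reduce (via the $\mathbf C_n$-class of $xyx$) to showing that $\mathbf F$ satisfies neither $xyx\approx x^2y$ nor $xyx\approx yx^2$.

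The only real divergence is in this last step. The paper does not compute the $\mathbf F$-class of $xyx$; instead it invokes the strict inclusion $\mathbf E\subset\mathbf F$ (with $\mathbf E=\var\{x^2\approx x^3,\,x^2y\approx xyx,\,x^2y^2\approx y^2x^2\}$), which is imported from~\cite{Gusev-Vernikov-17+}, and observes that either of $xyx\approx x^2y$ or $xyx\approx yx^2$ in $\mathbf F$ would force $\mathbf F\subseteq\mathbf E$ (the second case via the chain $xyx\approx yx^2\approx yx^3\approx xyx^2\approx x^2yx\approx x^2y$). Your alternative --- a direct rewriting analysis showing that every one-step application of a defining identity of $\mathbf F$ to a word $xyx^m$ produces another word of this form --- is sound, and has the advantage of being self-contained; the case analysis you sketch (squares and repeated nonempty factors of $xyx^m$ are powers of $x$, so the substituted letters are forced to be powers of $x$ or to match the whole word) does go through, though in a written proof it would need to be spelled out a bit more carefully, especially for the identity $xytxy\approx yxtxy$ and for substitutions sending some variables to $\lambda$.
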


\begin{proof}
Evidently, $(\mathbf C_n \wedge \mathbf B_{2,3})\vee \mathbf F \subseteq (\mathbf C_n \vee \mathbf F)\wedge \mathbf B_{2,3}$. We are going to verify that this inclusion is strict. Lemma~\ref{word problem C_2} implies that $\mathbf C_2\subseteq \mathbf F$. Then
\[
(\mathbf C_n \wedge \mathbf B_{2,3})\vee \mathbf F = \mathbf C_2\vee \mathbf F=\mathbf F.
\]
Thus, we need to verify that $\mathbf F\subset (\mathbf C_n \vee \mathbf F)\wedge \mathbf B_{2,3}$. It suffices to establish that $(\mathbf C_n \vee \mathbf F)\wedge \mathbf B_{2,3}$ violates $xyx\approx xyx^2$. If, otherwise, $(\mathbf C_n \vee \mathbf F)\wedge \mathbf B_{2,3}$ satisfies $xyx\approx xyx^2$ then there is a sequence of pairwise distinct words~\eqref{sequence of words} such that $\mathbf v_0 = xyx$, $\mathbf v_m = xyx^2$ and, for any $0\le i<m$, the identity $\mathbf v_i\approx \mathbf v_{i+1}$ holds either in $\mathbf B_{2,3}$ or in $\mathbf C_n \vee \mathbf F$. We note that the identity $xyx\approx \mathbf v_1$ does not hold in the variety $\mathbf B_{2,3}$ because $xyx$ is an isoterm for $\mathbf B_{2,3}$. On the other hand, the identity $xyx\approx \mathbf v_1$ holds in $\mathbf C_n$ if and only if this identity follows from commutativity. Then $\mathbf v_1\in\{x^2y,yx^2\}$. Put 
\[
\mathbf E= \var\{x^2\approx x^3,x^2y\approx xyx,x^2y^2\approx y^2x^2\}.
\]
It is evident that $\mathbf E\subseteq \mathbf F$. Comparison of Propositions 4.2 and 6.9(i) of the article~\cite{Gusev-Vernikov-17+} shows that this inclusion is strict. Thus,
\begin{equation}
\label{E subset F}
\mathbf E \subset \mathbf F
\end{equation}
(we note that in~\cite{Gusev-Vernikov-17+} the variety $\bf F$ is denoted by $\mathbf F_1$). If $\mathbf v_1=x^2y$ then $\mathbf F$ satisfies the identity $x^2y\approx xyx$. We obtain a contradiction with~\eqref{E subset F}. If $\mathbf v_1=yx^2$ then $\mathbf F$ satisfies the identities
\[
xyx\approx \mathbf v_1=yx^2\approx yx^3\approx xyx^2\approx x^2yx\approx x^2y,
\]
and we have a contradiction with~\eqref{E subset F} again. So, $(\mathbf C_n \vee \mathbf F)\wedge \mathbf B_{2,3}$ violates $xyx\approx xyx^2$, whence $\mathbf F\subset (\mathbf C_n \vee \mathbf F)\wedge \mathbf B_{2,3}$.
\end{proof}

\section{Proofs of the main results}
\label{section proofs}

\begin{proof}[Proof of Proposition~\ref{alternative}]
Let $\bf V$ be a proper non-commutative non-complete\-ly regular monoid variety that is an upper-modular element of the lattice $\mathbb{MON}$. Then $\mathbf D\subseteq \mathbf V$ by Lemma~\ref{non-cr and non-commut}. It is proved in~\cite[Lemma 2.16]{Vernikov-08a} that the variety of all semigroups is generated by all minimal non-Abelian varieties of groups. This fact and Proposition~\ref{MON sublattice of SEM} imply that there exists a minimal non-Abelian group variety $\bf G$ such that $\bf G\nsubseteq V$. Then $\mathbf V \wedge \mathbf G=\mathbf A_n$ for some positive integer $n$, whence $ \mathbf C_2\vee(\mathbf V \wedge \mathbf G)=\mathbf C_2\vee\mathbf A_n$. On the other hand, $\mathbf D \subseteq \mathbf C_2 \vee \mathbf G$ by Lemma~\ref{comm comp-reg}. Taking into account the fact that $\mathbf D\subseteq \mathbf V$, we have that $\mathbf D \subseteq \mathbf V\wedge(\mathbf C_2 \vee \mathbf G)$. The variety $\mathbf A_n\vee\mathbf C_2$ is commutative, while $\bf D$ is non-commutative, whence
\[
\mathbf C_2\vee(\mathbf V \wedge \mathbf G) \ne \mathbf V\wedge(\mathbf C_2 \vee \mathbf G).
\] 
Since $\mathbf C_2\subset\mathbf D\subseteq \mathbf V$, we obtain a contradiction with the fact that the variety $\bf V$ is an upper-modular element of the lattice $\mathbb{MON}$.
\end{proof}

\begin{proof}[Proof of Proposition~\ref{commut are codistr}]
Let $\mathbf V$ be a commutative monoid variety and $\mathbf Y$, $\mathbf Z$ be arbitrary monoid varieties. Put $\mathbf X=\mathbf V\wedge(\mathbf Y \vee \mathbf Z)$ and $\mathbf W=(\mathbf V \wedge \mathbf Y) \vee (\mathbf V \wedge \mathbf Z)$. Evidently, $\mathbf W \subseteq \mathbf X$. We need to verify that $\mathbf X \subseteq \mathbf W$. If $\mathbf V\subseteq \mathbf Y$ then
\[
\mathbf X=\mathbf V\wedge(\mathbf Y \vee \mathbf Z)=\mathbf V = \mathbf V \vee (\mathbf V \wedge \mathbf Z) = (\mathbf V \wedge \mathbf Y) \vee (\mathbf V \wedge \mathbf Z)=\mathbf W,
\]
and we are done. Therefore, we may assume that $\mathbf V\nsubseteq \mathbf Y$. By symmetry, $\mathbf V\nsubseteq \mathbf Z$. If $\mathbf V$ is periodic then $\mathbf X$ is periodic too. If $\mathbf V$ is non-periodic then $\mathbf V$ is the variety of all commutative monoids. Since $\mathbf V\nsubseteq \mathbf Y$ and $\mathbf V\nsubseteq \mathbf Z$, the varieties $\mathbf Y$ and $\mathbf Z$ are periodic, whence $\mathbf Y\vee \mathbf Z$ is periodic too. Thus, $\mathbf X$ is a periodic commutative variety. Then Lemma~\ref{commutative} imply that $\mathbf X=\mathbf M\vee\mathbf A_s$ for some $s$ where $\mathbf M$ is one of the varieties $\mathbf T$, $\mathbf{SL}$ or $\mathbf C_n$ with $n\ge 2$. Evidently, $\mathbf M\subseteq \mathbf V$ and $\mathbf M\subseteq \mathbf Y\vee \mathbf Z$. Now we are going to prove that either $\mathbf M\subseteq \mathbf Y$ or $\mathbf M\subseteq \mathbf Z$. If $\mathbf M=\mathbf T$ then we are done. If $\mathbf M=\mathbf{SL}$ then the required fact follows from Lemma~\ref{group variety}. Let now $\mathbf M=\mathbf C_n$ with $n\ge 2$. Suppose that $\mathbf M\nsubseteq \mathbf Y$ and $\mathbf M\nsubseteq \mathbf Z$. It is proved in~\cite[Lemma 2.5]{Gusev-Vernikov-17+} that if a monoid variety does not contain $\mathbf C_n$ then this variety satisfies the identity $x^{n-1}\approx x^{n-1+\ell}$ for some natural $\ell$. This fact implies that there are natural numbers $i$ and $j$ such that $x^{n-1}\approx x^{n-1+i}$ holds in $\mathbf Y$ and $x^{n-1}\approx x^{n-1+j}$ holds in $\mathbf Z$. Then the variety $\mathbf Y\vee \mathbf Z$ satisfies the identity $x^{n-1}\approx x^{n-1+ij}$. We obtain a contradiction with the fact that $\mathbf M\subseteq \mathbf Y\vee \mathbf Z$. Thus, we have proved that either $\mathbf M\subseteq \mathbf Y$ or $\mathbf M\subseteq \mathbf Z$.  Since $\mathbf M\subseteq \mathbf V$, we get that either $\mathbf M\subseteq \mathbf V\wedge \mathbf Y$ or $\mathbf M\subseteq \mathbf V\wedge \mathbf Z$. Therefore, $\mathbf M\subseteq \mathbf W$.

Now we are going to verify that $\mathbf A_s\subseteq \mathbf W$. We note also that $\mathbf W$ is a periodic commutative variety. Then Lemma~\ref{commutative} applies with the conclusion that $\mathbf W = \mathbf M'\vee \mathbf A_r$ for some $r$ where $\mathbf M'$ is one of the varieties $\mathbf T$, $\mathbf{SL}$ or $\mathbf C_n$ with $n\ge 2$. Suppose that $s$ does not divide $r$. Then there exist a prime number $p$ and a positive integer $k$ such that $p^k$ divides $s$ but does not divide $r$. Put $q=p^k$. Then $\mathbf A_q\subseteq \mathbf A_s\subseteq \mathbf X$ but $\mathbf A_q\nsubseteq \mathbf A_r$. It is easy to see that any group subvariety of the variety $\mathbf W=\mathbf M'\vee \mathbf A_r$ is contained in $\mathbf A_r$ (this follows from Proposition~\ref{MON sublattice of SEM} and Lemma~\ref{maximal group subvariety}, for instance). Therefore, $\mathbf A_q\nsubseteq \mathbf W$. Since $\mathbf A_q\subseteq \mathbf X$, we have that $\mathbf A_q\subseteq \mathbf Y\vee\mathbf Z$. It is proved in~\cite[Theorem 1.2]{Vernikov-11} that every variety of periodic Abelian groups is a codistributive element of the lattice $\mathbb{SEM}$. This fact and Proposition~\ref{MON sublattice of SEM} imply that
\[
\mathbf A_q=\mathbf A_q\wedge(\mathbf Y\vee\mathbf Z)=(\mathbf A_q\wedge\mathbf Y)\vee (\mathbf A_q\wedge\mathbf Z).
\]
Since the subvariety lattice of the variety $\mathbf A_q$ is a chain, we have that $\mathbf A_q$ coincides with one of the varieties $\mathbf A_q\wedge \mathbf Y$ or $\mathbf A_q\wedge \mathbf Z$, whence either $\mathbf A_q\subseteq \mathbf Y$ or $\mathbf A_q\subseteq \mathbf Z$. Taking into account that $\mathbf A_q\subseteq \mathbf X\subseteq \mathbf V$, we obtain a contradiction with the fact that $\mathbf A_q\nsubseteq \mathbf W$. Thus, $s$ divides $r$. Then $\mathbf A_s\subseteq \mathbf A_r\subseteq \mathbf W$. Therefore, $\mathbf X = \mathbf M\vee \mathbf A_s\subseteq \mathbf W$. Proposition~\ref{commut are codistr} is proved.
\end{proof}

Propositions~\ref{alternative} and~\ref{commut are codistr} completely reduce the problems of describing codistributive or upper-modular elements in $\mathbb{MON}$ to consideration of completely regular varieties. As we have noted in Section~\ref{section intr}, there are non-codistributive elements of $\mathbb{MON}$ among periodic group varieties. But the following question is open.

\begin{question}
\label{all cr are umod?}
Is every completely regular monoid variety an upper-modular element of $\mathbb{MON}$?
\end{question}

The lattice $\mathbb{SEM}$ contains upper modular but not codistributive elements (see~\cite[Subsection~3.9]{Vernikov-15}). On the other hand, in the lattice of all commutative semigroup varieties the properties to be upper-modular and codistributive elements are equivalent~\cite[Theorem~1.1]{Vernikov-17}. The following question is open so far.

\begin{question}
\label{codistr=umod?}
Does there exist an upper-modular but not codistributive element of the lattice $\mathbb{MON}$?
\end{question}

Clearly, the negative answer to Question~\ref{codistr=umod?} immediately implies the negative answer to Question~\ref{all cr are umod?}.

\begin{proof}[Proof of Theorem~\ref{costandard theorem}]
The implication (ii)~$\Rightarrow$~(i) is obvious. It remains to prove the implications (i)~$\Rightarrow$~(iii) and (iii)~$\Rightarrow$~(ii).

(i) $\Rightarrow$ (iii). Let $\mathbf V$ be a proper monoid variety that is a modular and upper-modular element of the lattice $\mathbb{MON}$. Then Proposition~\ref{alternative} implies that $\mathbf V$ is either completely regular or commutative. The case when $\bf V$ is completely regular and non-commutative is impossible by Lemma~\ref{comm comp-reg}. Therefore, $\bf V$ is commutative. Further, Lemma~\ref{V is commutative and non-combinatorial} implies that all groups of $\bf V$ are trivial. Then $\bf V$ satisfies the identity $x^n\approx x^{n+1}$ for some $n$, whence $\mathbf V\subseteq \mathbf C_n$. By Lemma~\ref{commutative} (see also~\cite[Proposition 5.1]{Gusev-Vernikov-17+}, for instance), $\bf V$ coincides with one of the varieties $\bf T$, $\bf SL$ or $\mathbf C_k$ for some $k\le n$. Finally, Lemma~\ref{C_n whenever n>2} implies that the case $\mathbf V=\mathbf C_k$ with $k\ge 3$ is impossible.

(iii) $\Rightarrow$ (ii). In view of Lemma~\ref{some neutral elements}, the varieties $\mathbf T$, $\mathbf{SL}$ and $\mathbf{MON}$ are neutral elements of the lattice $\mathbb{MON}$. Then these varieties are costandard elements of $\mathbb{MON}$ as well. It remains to prove that the variety $\mathbf C_2$ is a costandard element of this lattice too.

It is easy to verify that an element of a lattice is costandard whenever it is modular and codistibutive. This claim readily follows from~\cite[Theorem~253]{Gratzer-11} or~\cite[Proposition 1.7]{Seselja-Tepavcevic-01}, for instance (to avoid a confusion, we note that, in~\cite{Seselja-Tepavcevic-01}, modular elements are called $s$-\emph{modular} once, while ``a modular element'' means the same as ``a lower-modular element'' in our terminology). In view of the mentioned fact and Proposition~\ref{commut are codistr}, it suffices to prove that $\mathbf C_2$ is a modular element of the lattice $\mathbb{MON}$. Assume the contrary. Then~\cite[Proposition 2.1]{Jezek-81} implies that there exist the varieties $\mathbf U$ and $\mathbf W$ such that $\mathbf U\subset \mathbf W$, $\mathbf U\wedge \mathbf C_2=\mathbf W\wedge \mathbf C_2$ and $\mathbf U\vee \mathbf C_2=\mathbf W\vee \mathbf C_2$. If $\mathbf C_2\subseteq \mathbf U$ then $\mathbf W\wedge \mathbf C_2=\mathbf U\wedge \mathbf C_2=\mathbf C_2$, whence $\mathbf C_2\subseteq \mathbf W$. But then $\mathbf U=\mathbf C_2\vee \mathbf U=\mathbf C_2\vee \mathbf W=\mathbf W$. This contradicts the choice of $\mathbf U$ and $\mathbf W$. Thus, $\mathbf C_2\nsubseteq \mathbf U$. Analogously, $\mathbf C_2\nsubseteq \mathbf W$. It is proved in~\cite[Corollary~2.6]{Gusev-Vernikov-17+} that a monoid variety ${\bf X}$ is completely regular if and only if $\mathbf C_2 \nsubseteq {\bf X}$. Therefore, the varieties $\mathbf U$ and $\mathbf W$ are completely regular.

Suppose that $\mathbf U$ is a group variety. Then $\mathbf{SL} \nsubseteq \mathbf U$. If $\mathbf W$ is a non-group variety then $\mathbf{SL} \subseteq \mathbf W$ by Lemma~\ref{group variety}. Then $\mathbf U\wedge \mathbf C_2=\mathbf T$ but $\mathbf{SL}\subseteq\mathbf W\wedge \mathbf C_2$. We obtain a contradiction with the equality $\mathbf U\wedge \mathbf C_2=\mathbf W\wedge \mathbf C_2$. Therefore, $\mathbf W$ is a group variety. Proposition~\ref{MON sublattice of SEM} and Lemma~\ref{maximal group subvariety} imply that $\mathbf U$ is the largest group subvariety of the variety $\mathbf U\vee\mathbf C_2$. But this is impossible because $\bf W$ is a group variety and
\[
\mathbf U\subset\mathbf W\subset \mathbf W\vee\mathbf C_2=\mathbf U\vee\mathbf C_2.
\]
We see that $\mathbf U$ is a non-group variety. Then $\mathbf{SL}\subseteq \mathbf U$ by Lemma~\ref{group variety}. In this case $\mathbf{SL}\subseteq \mathbf U\wedge \mathbf C_2=\mathbf W\wedge \mathbf C_2\subseteq \mathbf W$. Therefore, $\mathbf W$ is also a non-group variety. Since $\mathbf U$ is completely regular, it satisfies $x\approx x^{n+1}$ for some positive integer $n$. Let $n$ be the least number with such a property, while $\Sigma$ be an identity basis of the variety $\mathbf U$. We denote by $\zeta$ the endomorphism of the monoid $F^1$ which maps each letter $x$ into the word $x^{n+1}$. Put
\[
\Sigma^\ast=\{\zeta(\mathbf u)\approx \zeta(\mathbf v)\mid \mathbf u\approx\mathbf v\in \Sigma\}.
\]
Obviously, $\mathbf U = \var\{x\approx x^{n+1},\Sigma^\ast\}$. If $\mathbf p \approx \mathbf q\in\Sigma^\ast$ then $\con(\mathbf p)=\con(\mathbf q)$ by Lemma~\ref{group variety}. According to Lemma~\ref{word problem C_2}, the variety $\mathbf C_2$ satisfies the identity system $\Sigma^\ast$. Taking into account that $\mathbf U\vee \mathbf C_2=\mathbf W\vee \mathbf C_2$, we obtain that $\mathbf W$ satisfies the identity system $\Sigma^\ast$ too. Since the identity $x^2\approx x^3$ holds in $\mathbf C_2$ and the identity $x\approx x^{n+1}$ holds in $\mathbf U$, the variety $\mathbf U\vee \mathbf C_2=\mathbf W\vee \mathbf C_2$ satisfies $x^2\approx x^{n+2}$. Taking into account that $\mathbf W$ is completely regular, we get that $x\approx x^{n+1}$ holds in $\mathbf W$. Then $\mathbf W\subseteq \mathbf U$. We obtain a contradiction with the choice of the varieties $\mathbf U$ and $\mathbf W$. Thus, we have proved that $\mathbf C_2$ is a modular, and therefore, a costandard element of the lattice $\mathbb{MON}$.
\end{proof}

\begin{proof}[Proof of Theorem~\ref{neutral theorem}]
The implication (iii)~$\Rightarrow$~(ii) follows from Lemma~\ref{some neutral elements}, while the implication (ii~$\Rightarrow$~(i) is obvious. It remains to prove the implication (i)~$\Rightarrow$~(iii). Let $\bf V$ be a modular, lower-modular and upper-modular element of the lattice $\mathbb{MON}$. Theorem~\ref{costandard theorem} implies that $\bf V$ coincides with one of the varieties $\mathbf T$, $\mathbf{SL}$, $\mathbf C_2$ or $\mathbf{MON}$. Lemma~\ref{comm comp-reg} implies that $\mathbf{V\ne C}_2$, and we are done.
\end{proof}

\subsection*{Acknowledgments}

The author is sincerely grateful to Professor B.M.~Vernikov for his great assistance in the improvement of the initial version of the manuscript and to Professor M.V.~Volkov for helpful discussions.

\end{document}